\newtheorem{tm}{Theorem}
\newtheorem{co}{Conjecture} \newtheorem{lm}[tm]{Lemma}
\theoremstyle{definition} 
\newtheorem{rmk}[tm]{Remark} \newtheorem{ex}[tm]{Example}
\newcommand{\bC}{\ensuremath{\mathbb C}}
\title[Powers of generic ideals and the WLP for powers of some
monomial CI's]{Powers of generic ideals and the weak Lefschetz
  property for powers of some monomial complete intersections}
\author{Mats Boij}\address{Department of Mathematics, KTH - Royal
  Institute of Technology}\email{boij@kth.se} \author{Ralf
  Fr\"oberg}\address{Department of Mathematics, Stockholm
  University}\email{ralff@math.su.se} \author{Samuel
  Lundqvist}\address{Department of Mathematics, Stockholm
  University}\email{samuel@math.su.se} \date{}
  \thanks{The first author was partially supported by the grant VR2013-4545.}
\begin{document}
\begin{abstract}
  Given an ideal $I=(f_1,\ldots,f_r)$ in $\mathbb C[x_1,\ldots,x_n]$
  generated by forms of degree $d$, and an integer $k>1$, how large
  can the ideal $I^k$ be, i.e., how small can the Hilbert function of
  $\mathbb C[x_1,\ldots,x_n]/I^k$ be? If $r\le n$ the smallest Hilbert
  function is achieved by any complete intersection, but for $r>n$,
  the question is in general very hard to answer.  We study the
  problem for $r=n+1$, where the result is known for $k=1$. We also
  study a closely related problem, the Weak Lefschetz property, for
  $S/I^k$, where $I$ is the ideal generated by the $d$'th powers of
  the variables.
\end{abstract}

\maketitle

\section{Introduction}
Let $I=(f_1,\ldots,f_r)$ be an ideal generated by forms of degree $d$
in $\bC[x_1,\ldots,x_n]$, and let $ k\ge1$. How large can the ideal
$I^k$ be, i.e., how small can the Hilbert function of
$\bC[x_1,\ldots,x_n]/I^k$ be? It is clear that we get the smallest
Hilbert series if the $f_i$'s are general. If $r\le n$, $I$ is a
complete intersection, and the Hilbert series (and even the graded
Betti numbers) for $I^k$ are known (\cite{GuardoVanTuyl}). But if $r>n$ not
much is known even if $k=1$, and we are not aware of any result for
$k>1$. For $k=1$ the main classes for which the result is known is
when $n\le3$ (\cite{Anick},\cite{Froberg}) or when $r=n+1$ (\cite{Stanley}).  In
all known cases the series for an ideal with $r$ general generators of
degree $d \geq 2$ in $n$ variables is $[(1-t^d)^r/(1-t)^n]$, where
$[\sum_{i\ge0}a_it^i]$ means truncate before the first nonpositive
term.  A first guess for the Hilbert series of
$\bC[x_1,\ldots,x_n]/I^k$, $I$ generated by $r$ general forms of
degree $d$, could be that the series for $k \gg1$ and $r>n$ is the
same as for ${k+r-1\choose r-1}$ general forms of degree $dk$.  We
will show that this is not always the case for geometric reasons.

We also study a closely related problem, the {\em Weak Lefschetz
  property} (WLP) for powers of some monomial complete intersections
ideals. Recall that a graded algebra $A$ satisfies the WLP if there
exists a linear form $L$ such that the multiplication map
$\times L\colon A_i\rightarrow A_{i+1}$ has maximal rank for all
degrees $i$. See \cite{MiglioreNagel} for a survey on the WLP. 

For a graded algebra $R = \oplus R_i$ we let $R(t)$ denote the Hilbert
series of $R$.

\section{Powers of ideals of general forms}
Let $f_1,f_2,\ldots,f_{n+1}$ be general forms of degree $d$ in
$\bC[x_1,\ldots,x_n]$.  We will start by showing that when
$k = d^{n-1}$, the dimension of
$\bC[x_1,\ldots,x_n]/(f_1,f_2,\ldots,f_{n+1})^k$ in degree $dk$ is one
less than expected.

 \begin{lm}\label{lm:unique} 
   For general forms $f_1,f_2,\dots,f_{n+1}$ of degree $d$ in
   $\bC[x_1,\ldots,x_n]$ there is exactly one relation of degree
   $d^{n-1}$ in $\bC[f_1,\ldots, f_{n+1}]$.
 \end{lm}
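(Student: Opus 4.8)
The plan is to read off the relations from the geometry of the map defined by the $f_i$. Let $\varphi\colon\bC[y_1,\dots,y_{n+1}]\to\bC[x_1,\dots,x_n]$ be the $\bC$-algebra homomorphism sending $y_i$ to $f_i$; its kernel $\mathfrak p$ is a homogeneous prime, and a relation of degree $m$ in $\bC[f_1,\dots,f_{n+1}]$ is exactly a nonzero element of $\mathfrak p_m$. Since $n+1$ general forms of degree $d$ have no common zero in $\bP^{n-1}$ (already $n$ general hypersurfaces have empty intersection there), $\varphi$ is induced by a morphism $\Phi=[f_1:\dots:f_{n+1}]\colon\bP^{n-1}\to\bP^{n}$, and $\mathfrak p$ is the homogeneous ideal of the irreducible variety $X=\overline{\Phi(\bP^{n-1})}$. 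First I would observe that $X$ is a hypersurface: the general forms $f_1,\dots,f_n$ form a regular sequence and hence are algebraically independent over $\bC$, so $\bC(f_1,\dots,f_{n+1})$ has transcendence degree $n$ over $\bC$ and $\dim X=n-1$. Therefore $\mathfrak p$ is a height-one homogeneous prime of the unique factorization domain $\bC[y_1,\dots,y_{n+1}]$, so $\mathfrak p=(F)$ with $F$ irreducible of degree $\delta:=\deg X$; consequently $\mathfrak p_m=F\cdot\bC[y_1,\dots,y_{n+1}]_{m-\delta}$ is zero for $m<\delta$ and one-dimensional for $m=\delta$, and the lemma reduces to the equality $\delta=d^{n-1}$.

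To pin down $\delta$ I would use that $\Phi$ is defined by forms of degree $d$: the preimage under $\Phi$ of $n-1$ general hyperplanes of $\bP^n$ is the intersection of $n-1$ general degree-$d$ hypersurfaces in $\bP^{n-1}$, that is, $d^{n-1}$ reduced points by B\'ezout, and these map onto the corresponding hyperplane section of $X$ with fibres of cardinality $\deg\Phi$; hence $\deg\Phi\cdot\delta=d^{n-1}$. In particular $\delta$ divides $d^{n-1}$, so $\delta\le d^{n-1}$ and $\dim\mathfrak p_{d^{n-1}}\ge 1$ in any case. The remaining step — which I expect to be the only real obstacle — is to show that for general $f_i$ the map $\Phi$ is birational onto its image, i.e.\ $\deg\Phi=1$. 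I would deduce this from two facts. First, $\dim\bC[f_1,\dots,f_{n+1}]_m$ equals the rank of the evaluation map $\bC[y]_m\to\bC[x]_{md}$ and is therefore lower semicontinuous in the coefficients of the $f_i$, so $\delta$, being encoded in the asymptotics of this Hilbert function, attains its generic value as its maximum over all tuples. Second, that maximum is $d^{n-1}$, which I would check on the explicit tuple $f_i=x_i^d$ for $i\le n$, $f_{n+1}=L^d$ with $L$ a general linear form: there a point of $\bP^{n-1}$ is determined by $[x_1^d:\dots:x_n^d]$ only up to scaling the coordinates by $d$-th roots of unity, i.e.\ up to an action of $\mu_d^{n-1}$, and a short root-of-unity computation shows that for general $L$ the value of $L^d$ separates the points of each such orbit, so $\Phi$ is generically injective and $\delta=d^{n-1}$ there.

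Combining these, for general $f_1,\dots,f_{n+1}$ one gets $\mathfrak p=(F)$ with $\deg F=d^{n-1}$, hence no relations of degree below $d^{n-1}$ and a one-dimensional space of relations in degree $d^{n-1}$ — the assertion of the lemma. A slicker but less self-contained route to the birationality would factor $\Phi$ as the $d$-uple Veronese embedding of $\bP^{n-1}$ followed by a general linear projection to $\bP^n$ and invoke the classical statement that such a general projection is birational onto its (hypersurface) image; I would nonetheless prefer the direct computation.
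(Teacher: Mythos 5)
Your proposal is correct and follows the same geometric strategy as the paper: pass to the map $\Phi=[f_1:\cdots:f_{n+1}]\colon\bP^{n-1}\to\bP^n$, note the closure of the image is an irreducible hypersurface whose ideal is principal, and identify its degree as $d^{n-1}$, checking the key numerical fact on the example of $d$-th powers of linear forms and transferring it to general forms by semicontinuity. The one place where you diverge is in how the degree is pinned down: the paper intersects with a general line, uses the special example to see that the fibre over a line is a \emph{reduced} complete intersection of $d^{n-1}$ points, and reads off the degree from that; you instead establish the relation $\deg\Phi\cdot\deg X=d^{n-1}$ and then prove $\deg\Phi=1$ directly, by showing that a general $L^d$ separates the $\mu_d^{n-1}$-orbits identified by $[x_1^d:\cdots:x_n^d]$. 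Your version makes explicit the generic injectivity of $\Phi$, which is exactly the point the paper's phrase ``thus a general line meets the hypersurface in $d^{n-1}$ distinct points'' quietly relies on (distinct preimage points could a priori collide on the line); so your treatment is, if anything, the more complete one. The only cosmetic quibble is that the reducedness of the intersection of $n-1$ general members of the base-point-free linear system is a Bertini statement rather than B\'ezout, but that does not affect the argument.
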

 
\begin{proof}
  General forms $f_1,f_2,\dots, f_{n+1}$ give a well-defined map
  $\Phi\colon\mathbb P^{n-1}\longrightarrow \mathbb P^{n}$ since they
  have no common zeroes. The closure of the image is a variety of
  dimension $n-1$, so it is a hypersurface. The inverse image of a
  general line gives a reduced complete intersection, as we can see
  from producing one such example by choosing the forms to be powers
  of general linear forms and by considering a line that gives the
  inverse image given by the radical complete intersection ideal
  $(f_1-f_{n},f_2-f_{n},\dots,f_{n-1}-f_n)$. Thus a general line meets
  the hypersurface in $d^{n-1}$ distinct points and hence the image is
  not contained in any hypersurface of lower degree than $d^{n-1}$. We
  conclude that there is a unique equation of degree $d^{n-1}$
  defining the image of $\Phi$ .
\end{proof}

In the next two subsections, we will focus on the situation in two and
three variables. We will need the following lemma.

\begin{lm}\label{zerodk1}
  Let $I$ be an ideal in $\bC[x_1,\ldots,x_n]$ generated by
  homogeneous elements in degree $d$ and suppose that $x_i^d \in I$
  for $i = 1, \ldots, n$.  Suppose that $\bC[x_1,\ldots,x_n]/I^k$ is
  zero in degree $a$ and that $a \geq d(n-1)$. Then
  $\bC[x_1,\ldots,x_n]/I^{k+1}$ is zero in degree $a+d$.
\end{lm}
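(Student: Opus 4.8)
The goal is to show that $(\bC[x_1,\dots,x_n]/I^{k+1})_{a+d} = 0$, i.e.\ that every monomial of degree $a+d$ lies in $I^{k+1}$. Write $S = \bC[x_1,\dots,x_n]$. The hypothesis that $S/I^k$ vanishes in degree $a$ says precisely that $S_a = (I^k)_a$, so every form of degree $a$ already lies in $I^k$; I will exploit this together with the fact that $x_i^d \in I$ for every $i$.

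The plan is a pigeonhole argument on exponents. Let $m = x_1^{e_1}\cdots x_n^{e_n}$ be a monomial of degree $a+d$. Since $a \ge d(n-1)$ we have $e_1 + \cdots + e_n = a + d \ge dn$, so by averaging at least one exponent, say $e_i$, satisfies $e_i \ge d$. Then $m = x_i^d \cdot m'$ with $m' = m/x_i^d$ a monomial of degree $a$. By the hypothesis, $m' \in S_a = (I^k)_a \subseteq I^k$, and $x_i^d \in I$ by assumption, so $m = x_i^d \, m' \in I \cdot I^k = I^{k+1}$. As the monomials of degree $a+d$ span $S_{a+d}$, this gives $S_{a+d} \subseteq I^{k+1}$, which is the claim.

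There is essentially no serious obstacle here; the only point that needs care is the degree bookkeeping: one must check that adding $d$ to the vanishing degree $a$ is enough to force a variable to appear to the power $d$, and this is exactly why the hypothesis $a \ge d(n-1)$ (rather than something weaker) is imposed — it guarantees $a + d \ge dn > n(d-1)$, so not all exponents can be at most $d-1$. I would state the averaging step explicitly and note that $n \ge 1$ is all that is used beyond the bound on $a$.
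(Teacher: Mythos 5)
Your proof is correct and is essentially identical to the paper's: both observe that $a+d\ge dn$ forces some exponent of any degree-$(a+d)$ monomial to be at least $d$, factor out $x_i^d$, and use $S_a\subseteq I^k$ to conclude the monomial lies in $I\cdot I^k=I^{k+1}$. No gaps; the degree bookkeeping you flag is handled correctly.
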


\begin{proof}
  Let $R = \bC[x_1,\ldots,x_n]$ and write
  $R = R_0 \oplus R_1 \oplus \cdots$. By the assumption,
  $R_a \subset I^k$, so $R_{a} x_i^d \subset I^{k+1}$ for
  $i = 1, \ldots, n$. Let $m$ be a monomial in $R_{a+d}$. Since
  $a +d \geq dn$, there is at least one variable, say $x_j$, whose
  exponent in $m$ is greater than or equal to $d$. But then
  $m \in R_a x_j^d$, that is, $m \in I^{k+1}$.
\end{proof}

\subsection{The case $n=2$, $r=3$}
Let $R_{2,d,k}=\bC[x,y]/(f_1,f_2,f_3)^k$, where the $f_i$'s are
general forms of degree $d$. If $d=2$, then $(f_1,f_2,f_3)=(x,y)^2$,
so $R_{2,2,k}=\bC[x,y]/(x,y)^{2k}$. We assume in this section that
$d\ge3$.

\begin{lm}\label{le}
  If $k \geq d-2$, then
  $R_{2,d,k}(t)\ge\sum_{i=0}^{dk-1}(i+1)t^i+{d-1\choose2}t^{dk}$
  coefficientwise.
\end{lm}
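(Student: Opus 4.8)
The plan is to compute the Hilbert function of $R_{2,d,k}$ degree by degree and check it dominates the proposed series. In degrees above $dk$ the claimed series vanishes, so there is nothing to prove there. Since $(f_1,f_2,f_3)^k$ is the $k$-th power of an ideal generated in degree $d$, it is generated in degree $dk$; hence it vanishes in every degree $i<dk$, so $(R_{2,d,k})_i$ has dimension $\dim\bC[x,y]_i=i+1$ there, matching the coefficient of $t^i$ on the right-hand side \emph{exactly}. It therefore remains only to show $\dim(R_{2,d,k})_{dk}\ge\binom{d-1}{2}$.

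For this I would introduce the ring map $\varphi\colon\bC[y_1,y_2,y_3]\to\bC[x,y]$, $y_i\mapsto f_i$, where $\bC[y_1,y_2,y_3]$ is graded by $\deg y_i=1$. Because $(f_1,f_2,f_3)$ is generated in degree $d$, the degree-$dk$ part of $(f_1,f_2,f_3)^k$ is precisely the span of the monomials $f_1^af_2^bf_3^c$ with $a+b+c=k$, that is, the image $\varphi(\bC[y_1,y_2,y_3]_k)$. Hence $\dim\big((f_1,f_2,f_3)^k\big)_{dk}=\binom{k+2}{2}-\dim(\ker\varphi)_k$, and
\[
  \dim(R_{2,d,k})_{dk}=(dk+1)-\binom{k+2}{2}+\dim(\ker\varphi)_k .
\]
So a lower bound on $(R_{2,d,k})_{dk}$ amounts to a lower bound on the number of independent degree-$k$ relations among $f_1,f_2,f_3$.

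Now I invoke Lemma \ref{lm:unique} with $n=2$: there is a nonzero relation $G\in\ker\varphi$ of degree $d$. Since $\bC[y_1,y_2,y_3]$ is a domain, multiplication by $G$ is injective, so the elements $G\cdot m$ with $m$ a monomial of degree $k-d$ are linearly independent and lie in $\ker\varphi$; therefore $\dim(\ker\varphi)_k\ge\binom{k-d+2}{2}$, with the convention that this binomial is $0$ when $k<d$. The hypothesis $k\ge d-2$ is exactly what guarantees $k-d+2\ge 0$, so that $\binom{k-d+2}{2}$ is a genuine nonnegative count. Substituting,
\[
  \dim(R_{2,d,k})_{dk}\ \ge\ (dk+1)-\binom{k+2}{2}+\binom{k-d+2}{2},
\]
and expanding the two binomials shows the right-hand side equals $\tfrac{(d-1)(d-2)}{2}=\binom{d-1}{2}$ for every $k\ge d-2$. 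Together with the degree-by-degree comparison above, this gives the claimed coefficientwise inequality.

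The argument is short once Lemma \ref{lm:unique} is available; the only real steps are the identification of $((f_1,f_2,f_3)^k)_{dk}$ with $\varphi(\bC[y_1,y_2,y_3]_k)$ and the bookkeeping. The point to watch is the role of $k\ge d-2$: it is precisely the condition under which the multiples of $G$ contribute the expected $\binom{k-d+2}{2}$ relations, and in fact the stated inequality is an equality in degree $dk$ under that hypothesis (for $k<d-2$ the dimension in degree $dk$ is strictly larger). I do not anticipate a genuine obstacle, since the plane-curve input packaged in Lemma \ref{lm:unique} does all of the geometric work.
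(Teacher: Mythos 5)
Your proof is correct and follows essentially the same route as the paper's: in degrees below $dk$ the ideal vanishes, and in degree $dk$ one bounds $\dim(I^k)_{dk}$ above by $\binom{k+2}{2}-\binom{k-d+2}{2}$ using the multiples of the unique degree-$d$ relation supplied by Lemma~\ref{lm:unique}. The only cosmetic difference is that you treat all $k\ge d-2$ uniformly via the convention $\binom{0}{2}=\binom{1}{2}=0$, where the paper separates the cases $k\ge d$ and $k\in\{d-1,d-2\}$.
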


\begin{proof}
  Consider the case when $k = d$. By Lemma~\ref{lm:unique} the
  dimension of $R_{2,d,k}$ in degree $dk$ is
  $d^2+1 - \left(\binom{d+2}{2}-1 \right) = \binom{d-1}{2}$.  This
  implies that in degree $k \geq d$, there are ${k-d+2\choose 2}$
  relations among the monomials of degree $k$ in the $f_i$'s. We have
  ${k+2\choose2}$ monomials of degree $k$ in the $f_i$'s.  Thus the
  dimension of $R_{2,d,k}$ in degree $dk$ is at least
  $dk+1-\left({k+2\choose2}-{k-d+2\choose2}\right)={d-1\choose2}$.

  Finally, let us consider the case when $k=d-1$ and $k = d-2$. The
  dimension of $R_{2,d,k}$ in degree $dk$ is at least
  $dk+1-{k+2\choose2}$. If $k = d-1$ or $k=d-2$, this equals
  $\binom{d-1}{2}$.
\end{proof}

\begin{lm}\label{gedk}
  If k $\geq d-2$, the Hilbert function of
  $\bC[x,y]/(x^d,y^d,x^{d-1}y)^k$ in degree $d k$ is ${d-1\choose2}$.
\end{lm}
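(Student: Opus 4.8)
The plan is to compute the Hilbert function directly, exploiting that $I := (x^d,y^d,x^{d-1}y)$ is a monomial ideal. First I would write $I^k$ as the monomial ideal generated by the products $(x^d)^a(y^d)^b(x^{d-1}y)^c = x^{da+(d-1)c}\,y^{db+c}$ over all $a,b,c\ge 0$ with $a+b+c=k$. A monomial $x^iy^{dk-i}$ of degree $dk$ (so $0\le i\le dk$) is divisible by such a generator precisely when $da+(d-1)c\le i$ and $db+c\le dk-i$; but the two left-hand sides sum to $d(a+b+c)=dk=i+(dk-i)$, so both inequalities are forced to be equalities. Hence $x^iy^{dk-i}\in I^k$ if and only if there are $a,b,c\ge 0$ with $a+b+c=k$ and $da+(d-1)c=i$.

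Next I would eliminate variables. Setting $s=a+c$, the equation $da+(d-1)c=i$ becomes $c=ds-i$ and $a=i-(d-1)s$, with $b=k-s$; the conditions $a,b,c\ge 0$ then say exactly that $\lceil i/d\rceil\le s\le\lfloor i/(d-1)\rfloor$ and $s\le k$. Since $i\le dk$ forces $\lceil i/d\rceil\le k$, the constraint $s\le k$ is automatic, so $x^iy^{dk-i}\in I^k$ if and only if the interval $[\lceil i/d\rceil,\lfloor i/(d-1)\rfloor]$ contains an integer, i.e.\ if and only if $\lceil i/d\rceil\le\lfloor i/(d-1)\rfloor$. Because $I^k$ is a monomial ideal, it follows that the dimension of $\bC[x,y]/I^k$ in degree $dk$ equals the number of integers $i$ with $0\le i\le dk$ and $\lceil i/d\rceil>\lfloor i/(d-1)\rfloor$.

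Finally I would count these exceptional indices. Writing $q=\lfloor i/(d-1)\rfloor$ we have $q(d-1)\le i\le qd-q+d-2$, while $\lceil i/d\rceil>q$ means $i\ge qd+1$; so for fixed $q$ the exceptional indices are $qd+1,\dots,qd+(d-2-q)$, a nonempty range of length $d-2-q$ exactly when $0\le q\le d-3$. Summing, $\sum_{q=0}^{d-3}(d-2-q)=\binom{d-1}{2}$, and the largest exceptional index, attained at $q=d-3$, equals $d^2-3d+1$. The hypothesis $k\ge d-2$ is precisely what ensures $d^2-3d+1\le d(d-2)\le dk$, so that all $\binom{d-1}{2}$ exceptional indices lie in the range $0\le i\le dk$; this yields the claimed value. (The bound $\binom{d-1}{2}\le$ this dimension also follows from Lemma~\ref{le} by upper semicontinuity of the Hilbert function as the general forms $f_1,f_2,f_3$ degenerate to $x^d,y^d,x^{d-1}y$, a useful consistency check.) I expect the only delicate point to be this last piece of bookkeeping: one must verify that $k\ge d-2$ is exactly the threshold below which an exceptional index gets cut off by the bound $i\le dk$ — which is also why the statement genuinely fails for smaller $k$, as already the case $d=4,\ k=1$ shows.
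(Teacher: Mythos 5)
Your proof is correct, and it takes a genuinely different and more self-contained route than the paper's. The paper obtains the lower bound $\binom{d-1}{2}$ by transferring the general-forms bound of Lemma~\ref{le} to the monomial specialization (semicontinuity), and then proves the matching upper bound by exhibiting families of elements of $I^k$ of degree $dk$ with distinct leading monomials, grouped by a residue parameter, followed by a case analysis on $k=d-2$, $k=d-1$, $k\ge d$. You instead exploit that $I^k$ is a monomial ideal whose degree-$dk$ part is spanned exactly by the products of $k$ generators (your observation that the two divisibility inequalities must be equalities because the generator already has degree $dk$ is the key point), reduce membership of $x^iy^{dk-i}$ to the arithmetic condition $\lceil i/d\rceil\le\lfloor i/(d-1)\rfloor$, and count the failures directly. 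This yields both bounds at once, needs no input from Lemma~\ref{le}, avoids the case split, and makes the hypothesis $k\ge d-2$ transparent: it is precisely the condition that the largest exceptional exponent $d^2-3d+1$ does not exceed $dk$ (your example $d=4$, $k=1$ correctly shows the statement fails below the threshold). The bookkeeping checks out: for fixed $q=\lfloor i/(d-1)\rfloor$ the exceptional $i$ form the block $qd+1,\dots,qd+(d-2-q)$, nonempty exactly for $0\le q\le d-3$, and $\sum_{q=0}^{d-3}(d-2-q)=\binom{d-1}{2}$. The only implicit assumption is $d\ge 3$ (so that $d-1\ge 2$ and the blocks are as described), which is the standing hypothesis of this subsection of the paper.
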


\begin{proof}
  By Lemma \ref{le}, we have
  $\bC[x,y]/(x^d,y^d,x^{d-1}y)^k(t) \geq
  \sum_{i=0}^{dk-1}(i+1)t^i+{d-1\choose2}t^{dk}$.
  Thus it is enough to show that the Hilbert function of
  $\bC[x,y]/(x^d,y^d,x^{d-1}y)^k$ in degree $d k$ is at most
  ${d-1\choose2}.$

  Let $h=\min(k,d-1)$.  We construct $h+1$ separate groups indexed by
  $0,\ldots,h$, where we let the $b$'th group consist of the
  polynomials $x^{da} (x y^{d-1})^b y^{dc}$ of degree $dk$ such that
  $a+b \leq k$.  The leading monomials in this group are
  $$
  x^b y^{dk-b}, x^{b+d} y^{dk-(b+d)}, \ldots, x^{b+(k-b)d} y^{dk -
    (b+(k-b)d)},
  $$
  so there are $k-b+1$ leading monomials in group $b$.

  In total this gives
  $\sum_{b=0}^{h} (k-b+1) = (h+1)(k+1) - \binom{h+1}{2}$ elements.  If
  $k = d-2$, then the dimension in degree $dk$ is at most
  $d(d-2) +1 - (d-1)(d-1) + \binom{d-1}{2} = \binom{d-1}{2}.$ If
  $k = d-1$, then the dimension in degree $dk$ is at most
  $d(d-1) +1 - d^2 + \binom{d}{2} = \binom{d-1}{2}$. Finally, if
  $k \geq d$, the dimension in degree $dk$ is at most
  $kd + 1 - d(k+1)+\binom{d}{2} = \binom{d-1}{2}.$
\end{proof}

\begin{lm}\label{gedk1}
  The Hilbert function of
  the algebra $\bC[x,y]/(x^d,y^d,x^{d-1}y + xy^{d-1})^k$ is zero in
  degree $d k +1$ if $k \geq \max(d-3,1)$.
\end{lm}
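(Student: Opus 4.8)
The plan is to pass to an initial ideal, bounding $\bC[x,y]/J^k$ from above by a monomial ring that is already zero in degree $dk+1$, and then using that $J^k$ and $\mathrm{in}(J^k)$ have the same Hilbert function.

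Write $J=(x^d,y^d,g)$ with $g=x^{d-1}y+xy^{d-1}$ and fix the degree-lexicographic term order with $x>y$, so $\mathrm{in}(g)=x^{d-1}y$. The key point is that $xg-x^dy=x^2y^{d-1}$ lies in $J$, hence $x^2y^{d-1}\in\mathrm{in}(J)$; together with $x^d,y^d,x^{d-1}y$ this shows $M\subseteq\mathrm{in}(J)$ for the monomial ideal $M:=(x^d,y^d,x^{d-1}y,x^2y^{d-1})$. Raising to the $k$-th power, $M^k\subseteq\mathrm{in}(J)^k\subseteq\mathrm{in}(J^k)$ (the last inclusion because leading terms are multiplicative), so the Hilbert function of $\bC[x,y]/J^k$ is bounded above in every degree by that of $\bC[x,y]/M^k$. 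It thus suffices to prove that $\bC[x,y]/M^k$ is zero in degree $dk+1$ whenever $k\ge\max(d-3,1)$.

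The product of $x^{d-1}y$ and $x^2y^{d-1}$ is $x^{d+1}y^d$, a multiple of $x^dy^d$, so any generator of $M^k$ that involves both of these monomials is a multiple of one that involves only $x^d,y^d$ and one of them; hence $M^k=(x^d,y^d,x^{d-1}y)^k+(x^d,y^d,x^2y^{d-1})^k$. Now consider a monomial $x^ay^b$ of degree $dk+1$. If $\lfloor a/d\rfloor+\lfloor b/d\rfloor=k$, it lies in $(x^d,y^d)^k$. Otherwise $\lfloor a/d\rfloor+\lfloor b/d\rfloor=k-1$; writing $a=d\alpha+r$, $b=d\beta+s$ with $r+s=d+1$, $2\le r\le d-1$, $\alpha+\beta=k-1$, one checks by exhibiting a suitable triple of exponents that $x^ay^b\in(x^d,y^d,x^{d-1}y)^k$ once $\alpha\ge d-1-r$, and, symmetrically, $x^ay^b\in(x^d,y^d,x^2y^{d-1})^k$ once $\alpha\le k+1-r$. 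For a fixed $r$ these two inequalities together cover every $\alpha\in\{0,\dots,k-1\}$ unless some $\alpha$ in that range satisfies $k+1-r<\alpha<d-1-r$; for $r=2$ and $r=d-1$ no such $\alpha$ exists, and for $3\le r\le d-2$ none exists precisely when $k\ge d-3$. Since $\{3,\dots,d-2\}$ is empty for $d\le4$, the hypothesis $k\ge\max(d-3,1)$ is exactly what forces every monomial of degree $dk+1$ into $M^k$, so $\bC[x,y]/M^k$, and therefore $\bC[x,y]/J^k$, vanishes in degree $dk+1$.

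The one step that is not purely formal is the reduction in the second paragraph — recognising that $x^2y^{d-1}$, the leading term of the $S$-polynomial of $x^d$ and $g$, belongs to $\mathrm{in}(J)$, so that the bounding monomial ideal can be taken to be $M$ and not just $(x^d,y^d,x^{d-1}y)$. After that the argument is the elementary interval count above; the same count shows $\bC[x,y]/M^k$ is nonzero in degree $dk+1$ when $1\le k<d-3$, so the range $k\ge\max(d-3,1)$ is sharp for this method.
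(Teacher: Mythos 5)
Your reduction to the monomial ideal $M=(x^d,y^d,x^{d-1}y,x^2y^{d-1})$ is sound as far as it goes: $M\subseteq\mathrm{in}(J)$ and $M^k\subseteq\mathrm{in}(J)^k\subseteq\mathrm{in}(J^k)$ are all correct. But the reduction loses too much, because $\mathrm{in}(J^k)$ is strictly larger than $\mathrm{in}(J)^k$, and in fact $\bC[x,y]/M^k$ is \emph{not} zero in degree $dk+1$ once $d\ge 5$. Concretely, take $d=5$, $k=2$: the monomial $x^3y^8$ has degree $dk+1=11$ and does lie in $J^2$ (from $xg^2=x^9y^2+2x^6y^5+x^3y^8$, after subtracting $x^9y^2$ and $x^6y^5$, which are multiples of $\mathrm{in}(x^5g)=x^9y$ and of $x^5y^5$), but no generator of $M^2$ divides it --- the only generators of $M^2$ with $x$-exponent at most $3$ are $x^2y^9$ and $y^{10}$, and both have $y$-exponent exceeding $8$. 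More generally $x^3y^{dk-2}\notin M^k$ for every $k$ when $d\ge5$: a generator $(x^d)^p(y^d)^q(x^{d-1}y)^{c_1}(x^2y^{d-1})^{c_2}$ dividing it must have $x$-degree at most $3$, forcing $p=c_1=0$ and $c_2\le1$, whence its $y$-degree is at least $dk-1$. So the strategy cannot close; the monomials it misses are exactly those that the paper's proof extracts from \emph{higher powers} of $g$ (elements such as $x\,g^{\ell+1}x^{id}y^{(k-i-\ell-1)d}$, which yield $x^{\ell+2}y^{(k-i)d-\ell-1}$ after reducing by previously obtained monomials). These are genuine ``new S-polynomial'' phenomena occurring inside $J^k$ that are invisible at the level of $\mathrm{in}(J)$.

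There is also a computational slip that masks the failure: the condition ``$x^ay^b\in(x^d,y^d,x^2y^{d-1})^k$ once $\alpha\le k+1-r$'' is obtained ``symmetrically'', but $M$ is not symmetric in $x$ and $y$ --- the mirror of $x^{d-1}y$ is $xy^{d-1}$, which is \emph{not} in $\mathrm{in}(J)$, and $x^2y^{d-1}$ is not the mirror of anything you have. A direct check (sum the two exponent inequalities for a generator $(x^d)^p(y^d)^q(x^2y^{d-1})^c$ dividing a monomial of degree $dk+1$ with $\lfloor a/d\rfloor+\lfloor b/d\rfloor=k-1$) gives $c\le 1$ and then forces $r=2$; so your second interval covers only the single value $r=2$ rather than the range $\alpha\le k+1-r$. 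Repairing this would require enlarging $M$ degree by degree inside $\mathrm{in}(J^k)$ itself, which is essentially what the paper's inductive construction of the ideals $J^{(\ell)}$ accomplishes directly, without Gr\"obner machinery.
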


\begin{proof}
  We will prove by induction that all the monomials of degree $dk+1$
  are in the ideal $J = (x^d,y^d,x^{d-1}y + xy^{d-1})^k$. Assume by
  induction on $\ell$ that we have all the monomials of the form
  $x^{id+j}y^{(k-i)d+1-j}$ and   $x^{(k-i)d+1-j}y^{id+j}$ in $J$ for $0\le
  i \le k-j$, $0 \le j \le \ell+1$. Denote the ideal generated by
  these monomials by $J^{(\ell)}$. Observe that $J^{(0)}
  =(x,y)(x^d,y^d)^k \subseteq J$ by the  defintion of $J$. 
  
  We compute for $0\le i \le k-\ell-1$ the following element in $J$
  \begin{multline*}
    x\cdot (x^{d-1}y+xy^{d-1})^{\ell+1} \cdot x^{id}y^{(k-i-\ell-1)d} 
    =\sum_{j=0}^{\ell+1} \binom{\ell+1}{j}
    x^{(i+j)d+\ell+2-2j}y^{(k-i-j)d-\ell-1+2j}  \\ 
    \equiv
    x^{id+\ell+2}y^{(k-i)d-\ell-1} \pmod{J^{(\ell)}}
  \end{multline*}
  since all but the first term are in $J^{(\ell)}$. By symmetry we
  also get that the following is in $J$
  \begin{multline*}
    y\cdot (x^{d-1}y+xy^{d-1})^{\ell+1} \cdot x^{(k-i-\ell-1)d} y^{id}
    =\sum_{j=0}^{\ell+1} \binom{\ell+1}{j}
    x^{(k-i-j)d-\ell-1+2j} y^{(i+j)d+\ell+2-2j}  \\ 
    \equiv
    x^{(k-i)d-\ell-1} y^{id+\ell+2} \pmod{J^{(\ell)}}
  \end{multline*}
  since all but the last term are in $J^{(\ell)}$.  Hence we have
  proved that $J^{(\ell+1)}\subseteq J$ for $0\le \ell\le k$.

Now we need to prove that $J^{(k)}$ contains all monomials of degree
$dk+1$. Any monomial of degree $dk+1$ can be written as $x^{jd+r}y^{(k-j)d-(r-1)}$,
where $0\le r<d$. This monomial is of the form
$x^{id+\ell+2}y^{(k-i)d-\ell-1}$ in $J^{(k)}$ if $r = \ell+2$ and $j\le k-\ell-1 = k-r+1$,
i.e., $r\le k-j+1$ and it is of the form
$x^{(k-i)d-\ell-1}y^{id+\ell+2}$ in $J^{(k)}$ if $jd+r=(k-i)d-\ell-1$, i.e.,
$j=k-i-1$, $r=d-\ell-1$ and $0\le i\le k-\ell-1$, i.e., $0\le k-j-1\le
r+k-d$, which can be written as $r\ge d-j-1\ge d-k$. We will get all
monomials in degree $dk+1$ if $d-j-1\le (k-j+1)+1$, i.e., when $d-3\le k$.
\end{proof}

\begin{tm}\label{main}
  If $k \geq d-2$, then
  \[
  R_{2,d,k}(t)=\sum_{i=0}^{dk-1}(i+1)t^i+{d-1\choose2}t^{dk}.
  \]
  The Betti numbers of $R_{2,d,k}$ are $\beta_{0,0}=1$,
  $\beta_{1,dk}=\frac{2dk-d^2+3d}{2}$, $\beta_{2,dk+1}=(dk-d^2+3d-2)$,
  $\beta_{2,dk+2}={d-1\choose2}$.
\end{tm}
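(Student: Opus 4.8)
The plan is to split the theorem into its two assertions: first pin down the Hilbert series of $R_{2,d,k}$, and then read off the graded Betti numbers from a minimal free resolution over $\bC[x,y]$. For the Hilbert series, the lower bound $R_{2,d,k}(t)\ge\sum_{i=0}^{dk-1}(i+1)t^i+\binom{d-1}{2}t^{dk}$ is exactly Lemma~\ref{le}, valid for $k\ge d-2$, so the content is the matching upper bound. Here I would use semicontinuity: the Hilbert function of $R_{2,d,k}$ (general $f_i$) is coefficientwise $\le$ that of $\bC[x,y]/(x^d,y^d,x^{d-1}y+xy^{d-1})^k$ and also $\le$ that of $\bC[x,y]/(x^d,y^d,x^{d-1}y)^k$, since specializing the generators can only make the ideal smaller. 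Lemma~\ref{gedk} gives that the latter specialization has Hilbert function exactly $\binom{d-1}{2}$ in degree $dk$, which forces $\dim (R_{2,d,k})_{dk}\le\binom{d-1}{2}$. Lemma~\ref{gedk1} gives that $\bC[x,y]/(x^d,y^d,x^{d-1}y+xy^{d-1})^k$ vanishes in degree $dk+1$ (for $k\ge\max(d-3,1)$, in particular for $k\ge d-2$ once $d\ge3$), so $R_{2,d,k}$ vanishes in degree $dk+1$, and for degrees $\ge dk+1$ we may invoke Lemma~\ref{zerodk1}-type reasoning (or simply monotonicity: once a quotient by an ideal containing the top part of $(x,y)^{dk}$ vanishes, it stays zero). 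In degrees $i<dk$ the lower bound already equals the maximum possible value $i+1=\dim\bC[x,y]_i$, so equality holds there trivially. Assembling these gives the stated Hilbert series.

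For the Betti numbers, I would argue that $R_{2,d,k}$ has projective dimension $2$ (it is a codimension-$2$ Cohen--Macaulay algebra — an Artinian quotient of $\bC[x,y]$), so its minimal free resolution has the form $0\to F_2\to F_1\to F_0\to R_{2,d,k}\to 0$ with $F_0=\bC[x,y]$. The Hilbert series then determines the alternating sum $\sum_j(-1)^j\beta_{i,j}t^j$ via $R_{2,d,k}(t)=\dfrac{\sum_i(-1)^i\sum_j\beta_{i,j}t^j}{(1-t)^2}$, i.e. $\sum_j\beta_{0,j}t^j-\sum_j\beta_{1,j}t^j+\sum_j\beta_{2,j}t^j=(1-t)^2R_{2,d,k}(t)$. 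Expanding $(1-t)^2\left(\sum_{i=0}^{dk-1}(i+1)t^i+\binom{d-1}{2}t^{dk}\right)$ collapses the telescoping part and leaves only terms in degrees $0$, $dk$, $dk+1$, $dk+2$; this shows the resolution is concentrated in those three internal degrees. To split the numerator into the three Betti tables I need to know that $F_1$ lives only in degree $dk$ and $F_2$ only in degrees $dk+1$ and $dk+2$ (no consecutive cancellation): this follows because the generators of $(f_1,f_2,f_3)^k$ have degree $dk$, and the first syzygies of an ideal generated in a single degree $dk$ in two variables sit in degrees $\ge dk+1$, while Cohen--Macaulayness and the peak of the $h$-vector at $dk$ force $F_2$ into degrees $dk+1,dk+2$. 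Comparing coefficients then yields $\beta_{1,dk}-\beta_{2,dk}=$ (coefficient of $t^{dk}$), $\beta_{2,dk+1}=$ (coefficient of $t^{dk+1}$ up to sign), $\beta_{2,dk+2}=\binom{d-1}{2}$, and a short computation gives the stated closed forms $\beta_{1,dk}=\frac{2dk-d^2+3d}{2}$ and $\beta_{2,dk+1}=dk-d^2+3d-2$, with $\beta_{2,dk}=0$.

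The main obstacle is not the Hilbert-series computation — that is essentially bookkeeping with the three preceding lemmas plus an upper-semicontinuity remark — but rather justifying rigorously that the minimal free resolution has \emph{no} ghost terms, i.e. that $\beta_{1,j}=0$ for $j\ne dk$ and $\beta_{2,j}=0$ for $j\notin\{dk+1,dk+2\}$, so that the alternating sum genuinely separates into the three claimed rows. The cleanest route is the Hilbert--Burch theorem: $R_{2,d,k}=\bC[x,y]/J$ with $J$ of codimension $2$, so $J$ is (up to a unit) the ideal of maximal minors of the $\beta_{1}\times(\beta_{1}-1)$ Hilbert--Burch matrix, and its column degrees are forced by the Hilbert function; checking that this matrix has entries of positive degree and that its two column-degree values are $dk+1$ and $dk+2$ (with the stated multiplicities) pins down $F_1$ and $F_2$ exactly. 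I would also double-check the boundary hypothesis $k\ge d-2$ is consistent with the hypotheses $k\ge\max(d-3,1)$ of Lemma~\ref{gedk1} and $k\ge d-2$ of Lemmas~\ref{le} and \ref{gedk}, and note the degenerate case $d=3$, $k=1$ separately if needed.
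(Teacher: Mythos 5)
Your proposal is correct and follows essentially the same route as the paper: Lemma~\ref{le} for the lower bound, Lemmas~\ref{gedk} and~\ref{gedk1} plus semicontinuity for the upper bound, and then the Euler-characteristic identity $(1-t)^2R_{2,d,k}(t)=\sum_{i,j}(-1)^i\beta_{i,j}t^{j}$ combined with the fact that the ideal is generated entirely in degree $dk$ to extract the Betti numbers. The only cosmetic difference is in ruling out ghost terms in $F_2$: the paper observes that $\beta_{2,j}$ is the socle dimension in degree $j-2$ and the socle sits only in degrees $dk-1$ and $dk$, whereas you invoke Hilbert--Burch / the determinacy of the last free module in a length-two resolution --- both are standard and equally valid.
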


\begin{proof}
  By Lemma \ref{gedk} and Lemma \ref{gedk1},
  $R_{2,d,k}(t)\le\sum_{i=0}^{dk-1}(i+1)t^i+{d-1\choose2}t^{dk}.$
  Together with Lemma~\ref{le} we get the claimed Hilbert series.
  
  The Hilbert series equals
  $(\sum_{i=0}^{dk+2}(-1)^i\beta_{i,j}t^{i+j})/(1-t)^2$. We know that
  the number of generators of $(f_1,f_2,f_3)^k$ are
  ${k+2\choose2}-{k-d+2\choose2}=\frac{2dk-d^2+3d}{2}$, and they are
  all of degree $dk$, so
  $\beta_1=\beta_{1,dk}=\frac{2dk-d^2+3d}{2}$. Also
  $\beta_{2,dk+2}={d-1\choose2}$ and $\beta_{2,i}=0$ if $i>dk+2$ since
  $R_{2,d,k}$ has socle of dimension ${d-1\choose2}$ in degree $dk$,
  and no socle in higher degree.
\end{proof}

 \begin{rmk}
   We are convinced that also the algebra
   $\bC[x,y]/(x^d,y^d,(x+y)^d)^k$, is zero in degree
   $d k +1$ for any $d$ and $k \geq \max(d-3,1)$, but we have not been
   able to  prove this. See also Conjecture \ref{co:dk1}.
 \end{rmk}

 \subsection{The case $n=3$, $r=4$}

 \begin{tm}\label{main2}
   The Hilbert series of $R_{3,2,k}$ is
   \[
   \sum_{i=0}^{2k-1}{i+2\choose2}t^i+(3k-1)t^{2k}.
   \]
 \end{tm}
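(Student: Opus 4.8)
The plan is to compute $\dim(R_{3,2,k})_j$ degree by degree, writing $I=(f_1,f_2,f_3,f_4)$ for the ideal of four general quadrics in $R=\bC[x_1,x_2,x_3]$. In degrees $j<2k$ there is nothing to prove: $I^k$ is generated in degree $2k$, so $(R_{3,2,k})_j=R_j$ has dimension $\binom{j+2}{2}$. The remaining tasks are to show $\dim(R_{3,2,k})_{2k}=3k-1$ and $\dim(R_{3,2,k})_j=0$ for $j>2k$.

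For the degree-$2k$ piece, note that $(I^k)_{2k}$ is the span of the $\binom{k+3}{3}$ products $f^\alpha:=f_1^{\alpha_1}f_2^{\alpha_2}f_3^{\alpha_3}f_4^{\alpha_4}$ with $|\alpha|=k$, and a linear relation $\sum_\alpha c_\alpha f^\alpha=0$ amounts to an element $\sum_\alpha c_\alpha y^\alpha$ of the kernel of the substitution map $\varphi\colon\bC[y_1,\dots,y_4]\to R$, $y_i\mapsto f_i$ (with $\deg y_i=1$). Thus $\dim(I^k)_{2k}=\binom{k+3}{3}-\dim\ker(\varphi)_k$. Now $\ker(\varphi)$ is prime, since the image $\bC[f_1,\dots,f_4]\subset R$ is a domain, and it has height one: by the proof of Lemma~\ref{lm:unique} the image of $\Phi$ is a surface in $\bP^3$, so $\bC[y]/\ker(\varphi)=\bC[f_1,\dots,f_4]$, its homogeneous coordinate ring, has Krull dimension $3$. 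A height-one prime in the UFD $\bC[y_1,\dots,y_4]$ is principal, so $\ker(\varphi)=(F)$; by Lemma~\ref{lm:unique}, $\dim\ker(\varphi)_4=1$, which forces $\deg F=4$ (in particular $\ker(\varphi)_j=0$ for $j<4$). Hence $\dim\ker(\varphi)_k=\dim\bC[y]_{k-4}=\binom{k-1}{3}$ and
\[
\dim(R_{3,2,k})_{2k}=\binom{2k+2}{2}-\binom{k+3}{3}+\binom{k-1}{3}=3k-1,
\]
the last step being a routine manipulation of binomial coefficients.

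For $j>2k$ it suffices to treat $j=2k+1$ (higher degrees follow by multiplying by linear forms). The Hilbert function $\dim\bigl(R/(g_1,g_2,g_3,g_4)^k\bigr)_j$ is an upper semicontinuous function of the quadrics $(g_1,\dots,g_4)$, minimized by general forms, so it is enough to exhibit one $4$-tuple of quadrics whose $k$-th power contains all of $R_j$ for $j\ge 2k+1$. I would take $g_1=x_1^2,\ g_2=x_1x_2,\ g_3=x_2^2,\ g_4=x_3^2$, so that $J:=(g_1,\dots,g_4)=(x_1,x_2)^2+(x_3^2)$ and therefore $J^k=\sum_{i=0}^k(x_1,x_2)^{2i}(x_3^{2(k-i)})$. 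A monomial $x_1^ax_2^bx_3^c$ lies in $J^k$ precisely when $\lfloor(a+b)/2\rfloor+\lfloor c/2\rfloor\ge k$; minimizing the left-hand side over $a+b+c=j$ gives $(j-1)/2$ for $j$ odd and $j/2-1$ for $j$ even, both of which are $\ge k$ as soon as $j\ge 2k+1$. So $R/J^k$ is zero in all degrees $\ge 2k+1$, and semicontinuity gives $(R_{3,2,k})_j=0$ there as well.

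I expect the degree-$2k$ computation to be the crux, and within it the essential point is identifying the relations among the $f^\alpha$ as exactly the multiples of the single quartic relation supplied by Lemma~\ref{lm:unique}; this needs both that $\ker(\varphi)$ is principal and that its generator has degree precisely $4$. The vanishing in higher degrees is comparatively painless once one hits on the specialization to $(x_1,x_2)^2+(x_3^2)$, whose powers are completely explicit.
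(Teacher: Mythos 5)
Your argument is correct, but it takes a genuinely different route from the paper's in both of its nontrivial steps. In degree $2k$ the paper only extracts a \emph{lower} bound for $\dim (R_{3,2,k})_{2k}$ from Lemma~\ref{lm:unique} (the $\binom{k-1}{3}$ multiples of the unique quartic relation give at least that many dependencies among the products $f^\alpha$), and it then has to produce the matching upper bound by exhibiting the specific ideal $(x^2,y^2,z^2,xy+xz+yz)^k$ and running a leading-monomial count in degree $2k$ (Lemma~\ref{lemma:32k}). You instead observe that $\ker\varphi$ is a height-one prime in the UFD $\bC[y_1,\dots,y_4]$, hence principal, and that Lemma~\ref{lm:unique} pins its generator to degree exactly $4$; this identifies the full relation space in every degree as $F\cdot\bC[y]_{k-4}$ and yields $\dim(R_{3,2,k})_{2k}=\binom{2k+2}{2}-\binom{k+3}{3}+\binom{k-1}{3}=3k-1$ on the nose, with no explicit example needed in that degree. (This structural point is stronger than what the paper uses: the same reasoning would upgrade the ``at least'' statements in Lemma~\ref{le} and Theorem~\ref{thm:33k} to exact counts of $\dim(I^k)_{dk}$.) For the vanishing in degrees $\ge 2k+1$ the paper again relies on $(x^2,y^2,z^2,xy+xz+yz)^k$, checking $k=1,2$ by machine and inducting via Lemma~\ref{zerodk1}, whereas your degeneration to $(x_1,x_2)^2+(x_3^2)$, whose $k$-th power is completely explicit, is shorter and computation-free --- and you are right to keep that monomial ideal away from degree $2k$ itself, where it is far from minimal ($k(k+1)$ monomials survive there rather than $3k-1$). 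The only thing the paper's choice of example buys that yours does not is reuse: the algebra $\bC[x,y,z]/(x^2,y^2,z^2,(x+y+z)^2)^k$ is exactly $\tilde T_{3,2,k}$, which feeds directly into the WLP statement of Theorem~\ref{WLPd2}.
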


 \begin{proof}
   A similar calculation as in the proof of Lemma \ref{le} shows that
   $R_{3,2,k}$ has at least dimension
   ${2k+2\choose2}-{k+3\choose3}+{k-4+3\choose3}=3k-1$ in degree
   $2k$. Thus we have an inequality. That there is equality follows
   from the following lemma.
 \end{proof}

 \begin{lm} \label{lemma:32k} The Hilbert series of
   \[
   \bC[x,y,z]/(x^2,y^2,z^2,(x+y+z)^2)^k=\bC[x,y,z]/(x^2,y^2,z^2,xy+xz+yz)^k
   \]
   is
   \[
   \sum_{i=0}^{2k-1}{i+2\choose2}t^i+(3k-1)t^{2k}.
   \]
 \end{lm}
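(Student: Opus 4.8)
Throughout, write $q = xy+xz+yz$ and $I = (x^2,y^2,z^2,q) = (x^2,y^2,z^2,(x+y+z)^2)$ (the two presentations coincide, as recorded in the statement), and set $J = I^k$. The plan is to compute the Hilbert function of $A := \bC[x,y,z]/J$ degree by degree. Since $I$ is generated in degree $2$, the ideal $J$ is generated in degree $2k$, so $J_i = 0$ and $\dim A_i = \binom{i+2}{2}$ for $i < 2k$. In degree $2k$, $J_{2k}$ is exactly the span of the $\binom{k+3}{3}$ products $(x^2)^a(y^2)^b(z^2)^c((x+y+z)^2)^j$ with $a+b+c+j = k$; that is, $J_{2k} = \phi(S_k)$ where $S = \bC[X_0,X_1,X_2,X_3]$ and $\phi\colon S\to\bC[x,y,z]$ is the graded (degree-doubling) homomorphism sending $X_0,X_1,X_2,X_3$ to $x^2,y^2,z^2,(x+y+z)^2$. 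Thus the key step is to compute $\dim\phi(S_k) = \binom{k+3}{3} - \dim(\ker\phi)_k$.

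To control $\ker\phi$, I would consider the morphism $\Phi\colon\mathbb{P}^2\to\mathbb{P}^3$, $[x:y:z]\mapsto[x^2:y^2:z^2:(x+y+z)^2]$ --- the morphism attached to the squares of the four linear forms $x,y,z,x+y+z$, which are in general position. As in the proof of Lemma~\ref{lm:unique}, $\Phi$ is everywhere defined, the closure $Y$ of its image is an irreducible surface, a general line pulls back to a reduced complete intersection of two conics (hence $4$ reduced points), and consequently $Y$ is a hypersurface of degree $4$ lying on no hypersurface of lower degree. Now $\ker\phi$ is the (prime, height-one) homogeneous ideal of $Y$; since $S$ is a UFD it is principal, $\ker\phi = (g)$, with $g$ the defining quartic of $Y$, so $\deg g = \deg Y = 4$. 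Hence $(\ker\phi)_k = g\cdot S_{k-4}$ has dimension $\binom{k-1}{3}$, giving
\[
\dim J_{2k} = \binom{k+3}{3} - \binom{k-1}{3} = 2k^2+2, \qquad \dim A_{2k} = \binom{2k+2}{2} - (2k^2+2) = 3k-1.
\]

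It remains to show $A$ vanishes in degree $2k+1$; then $A_i = A_1 A_{i-1} = 0$ for all $i \ge 2k+1$. Here I would use that $xyz \in I$, since $xyz = zq - xz^2 - yz^2$. Given a monomial $m = x^p y^q z^r$ of degree $2k+1$, write $p = 2p'+p_0$, $q = 2q'+q_0$, $r = 2r'+r_0$ with $p_0,q_0,r_0\in\{0,1\}$; as $2k+1$ is odd, $p_0+q_0+r_0 \in \{1,3\}$. If the sum is $1$, then $p'+q'+r' = k$ and $m$ equals a single variable times $(x^2)^{p'}(y^2)^{q'}(z^2)^{r'} \in (x^2,y^2,z^2)^k \subseteq J$. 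If the sum is $3$, then $p'+q'+r' = k-1$ and $m = xyz\cdot(x^2)^{p'}(y^2)^{q'}(z^2)^{r'} \in (x^2,y^2,z^2)^{k-1}\cdot I \subseteq J$. In either case $m \in J$, so $A_{2k+1} = 0$, and assembling the three ranges of degrees yields the claimed Hilbert series.

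The main obstacle is the identification $\ker\phi = (g)$ with $\deg g = 4$, i.e., that the homogeneous coordinate ring of the image surface is exactly $S/(g)$: this needs both the geometric input from Lemma~\ref{lm:unique} (the image is a hypersurface of degree exactly $d^{n-1} = 4$, on no hypersurface of lower degree) and the fact that $S$ is a UFD, which promotes the unique quartic equation to a principal defining ideal. A minor point to verify along the way is that $x^2,y^2,z^2,(x+y+z)^2$ genuinely satisfy the hypotheses of that proof --- no common zero, and some line whose preimage is a reduced complete intersection --- which is a short direct check. The degree-$(2k+1)$ vanishing, by contrast, is elementary; it could also be obtained by induction from small $k$ via Lemma~\ref{zerodk1}.
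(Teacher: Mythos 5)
Your proof is correct, but it takes a genuinely different route from the paper's in both halves. For degree $2k$, the paper does not compute $\dim J_{2k}$ directly: it exhibits, for every monomial of degree $2k$ outside an explicit list of $3k-1$, an element of $I^k$ having that monomial as lex-leading term, which gives only the upper bound $\dim A_{2k}\le 3k-1$; the matching lower bound is imported from the surrounding Theorem~\ref{main2} via the generic relation count of Lemma~\ref{lm:unique}. Your kernel computation $\dim J_{2k}=\binom{k+3}{3}-\binom{k-1}{3}=2k^2+2$ yields the exact value in one stroke and makes the lemma self-contained, at the price of having to rerun the geometric argument of Lemma~\ref{lm:unique} for these \emph{specific} (non-general) forms. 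You rightly flag the needed checks (base-point-freeness, a line whose preimage is a reduced complete intersection of two conics), and they do go through; note, though, that to conclude $\deg Y=4$ rather than $\deg Y\cdot\deg\Phi=4$ you also need $\Phi\colon[x:y:z]\mapsto[x^2:y^2:z^2:(x+y+z)^2]$ to be generically one-to-one --- a quick verification (the sign ambiguities $x\mapsto\pm x$, etc., force $(x+y)z=0$ or similar except on the antipodal identification, which is trivial projectively), but one worth a sentence since the paper's Lemma~\ref{lm:unique} is stated only for general forms. For degree $2k+1$, the paper checks $k=1,2$ by machine and inducts with Lemma~\ref{zerodk1}; your observation that $xyz=zq-xz^2-yz^2\in I$, combined with the parity decomposition of an odd-degree monomial, is more elementary and avoids the computation entirely. (Minor point: you use $q$ both for the quadric and for an exponent in $x^py^qz^r$; rename one.)
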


\begin{proof}
  It is enough to show that the Hilbert series has at most dimension
  $3k-1$ in degree $2k$ and that the algebra is zero in degree $2k+1$.

  We first consider degree $2k$. We claim that all monomials of degree
  $2k$ except the $3k-1$ monomials
  \[x^{2k-1}z,x^{2k-3}z^3,\ldots,xz^{2k-1},
  y^{2k-1}z,y^{2k-3}z^3,\ldots,yz^{2k-1},
  xy^{2k-2}z,xy^{2k-4}z^3,\ldots,xy^2z^{2k-3}\]
  occur as leading monomials in lexicographic ordering.  We have
  $(x^2,y^2,z^2,(x+y+z)^2)=(x^2,y^2,z^2,f)$, where $f=xy+xz+yz$. If
  $m=x^ay^bz^c$ is a monomial of degree $2k$, we let
  $t(m)=(a \pmod 2,b \pmod 2,c \pmod 2)$. If $t(m)=(0,0,0)$, then $m$
  lies in the ideal. If $t(m)=(1,1,0)$, then $m=(xy)^{\min(a,b)}M$,
  where $t(M)=(0,0,0)$, so $m$ is the leading monomial of
  $f^{\min(a,b)}M$. If $t(m)=(1,0,1)$ we claim that $m$ is a leading
  monomial except when $b=0$ or $a=1$. If $t(m)=(0,1,1)$ we claim that
  $m$ is a leading monomial except when $a=0$. To prove the claims, it
  suffices to show that all monomials of the form $x^2yzM$,
  $t(M)=(0,0,0)$ and all monomials of the form $x^3y^2zM$,
  $t(M)=(0,0,0)$ are leading. Now $x^2yz$ is the leading monomial of
  $f^2-x^2y^2$, and $x^3y^2z$ is the leading monomial of
  $f(f^2-x^2y^2-x^2z^2)$.

  We now consider degree $2k+1$. A calculation shows that
  $\bC[x,y,z]/(x^2,y^2,z^2,xy+xz+yz)$ is zero in degree three and that
  $\bC[x,y,z]/(x^2,y^2,z^2,xy+xz+yz)^2$ is zero in degree five. Since
  $2 k + 1 \geq 2 (n-1)$ when $k \geq 2$, the remaining cases follow
  from Lemma \ref{zerodk1}.
\end{proof}
 
\begin{tm} \label{thm:33k} The Hilbert series of $R_{3,3,k}$ is
  $\sum_{i=0}^{3k-1}{i+2\choose2}t^i+(27k-56)t^{3k}$ when
  $9\le k \le 40$ and $[(1-t^{3k})^{k+3\choose3}/(1-t)^3]$ when $k<9$.
\end{tm}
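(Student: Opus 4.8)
The target series can be written uniformly as $[(1-t^{3k})^{N}/(1-t)^3]$ with $N=\binom{k+3}{3}-\binom{k-6}{3}$, binomials read as $0$ when the top entry is less than $3$: for $k\le 8$ this is the stated $[(1-t^{3k})^{\binom{k+3}{3}}/(1-t)^3]$, while for $k\ge 9$ one checks that $\binom{3k+2}{2}-N=27k-56>0$ and that the degree-$(3k+1)$ coefficient $\binom{3k+3}{2}-3N$ is negative, so the series truncates right after degree $3k$ to $\sum_{i=0}^{3k-1}\binom{i+2}{2}t^i+(27k-56)t^{3k}$. Writing $I=(f_1,f_2,f_3,f_4)$ with the $f_i$ general cubics, so that $R_{3,3,k}=\bC[x,y,z]/I^k$, the plan is to prove in turn that $N=\dim(I^k)_{3k}$, that $R_{3,3,k}(t)$ dominates $[(1-t^{3k})^{N}/(1-t)^3]$ coefficientwise, and that it is dominated by it.

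For the first statement, let $\pi\colon\bC[y_1,\dots,y_4]\to\bC[x,y,z]$ be the map $y_i\mapsto f_i$, with $\deg y_i=3$, and put $\mathfrak a=\ker\pi$. Since $(I^k)_{3k}$ is exactly the image of $\bC[y_1,\dots,y_4]_k$, we have $\dim(I^k)_{3k}=\binom{k+3}{3}-\dim\mathfrak a_k$. By Lemma~\ref{lm:unique} there is a nonzero $g\in\mathfrak a$ of $y$-degree $9=d^{n-1}$ and no relation of lower degree; since $\bC[y_1,\dots,y_4]$ is a domain, $g$ is a non-zero-divisor, so $\dim\mathfrak a_k\ge\dim\bC[y_1,\dots,y_4]_{k-9}=\binom{k-6}{3}$, which gives the estimate $\dim(R_{3,3,k})_{3k}\le\binom{3k+2}{2}-N$ needed below. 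For the opposite estimate, note that $\bC[y_1,\dots,y_4]/\mathfrak a\cong\bC[f_1,\dots,f_4]$ is a domain of Krull dimension $3$ (any three general cubics are algebraically independent), so $\mathfrak a$ is a height-one prime in a UFD, hence principal; its generator must be $g$, so $\mathfrak a_k=g\cdot\bC[y_1,\dots,y_4]_{k-9}$ and $\dim\mathfrak a_k=\binom{k-6}{3}$ exactly, i.e.\ $\dim(I^k)_{3k}=N$. The lower bound for $R_{3,3,k}(t)$ is then soft: $I^k$ is generated by $N$ forms of degree $3k$, so the Fröberg inequality (\cite{Froberg}) gives $R_{3,3,k}(t)\ge[(1-t^{3k})^{N}/(1-t)^3]$ coefficientwise (with equality below degree $3k$, where $I^k=0$).

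The remaining inequality is the real content and the main obstacle. For $k\ge 9$ it is equivalent to $\bC[x,y,z]_{3k+1}\subseteq I^k$, since $I^k$ is generated in degree $3k$ and hence, once it contains everything in degree $3k+1$, it contains everything in all higher degrees, so $R_{3,3,k}$ is zero there; for $k\le 8$ it says that the $\binom{k+3}{3}$ products of powers of the $f_i$ impose the generic number of conditions in every degree. By semicontinuity of Hilbert functions, each of these --- finitely many statements, for $k$ in the stated range --- may be verified for a single sufficiently general choice of $f_1,\dots,f_4$; and for a monomial specialisation with $x_i^3\in I$ one can additionally invoke Lemma~\ref{zerodk1} to transport the vanishing from one power to the next and so shorten the list of base cases. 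Carrying out these computations proves the theorem in the stated range; the bound $k\le 40$ is simply the largest power for which the computation was completed, and (compare the remark preceding Conjecture~\ref{co:dk1}) the formula for $k\ge 9$ is expected to persist for all $k$. I see no closed-form substitute for this last step: the two-variable analogue, Lemma~\ref{gedk1}, is proved by an explicit induction that exhibits every monomial of degree $dk+1$ inside the ideal, but I know of no such combinatorial construction in three variables, so this is genuinely where either a new structural idea about powers of ideals of general forms, or the computer, is needed.
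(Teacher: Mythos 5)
Your proposal is correct and follows essentially the same route as the paper: the lower bound comes from the unique degree-$9$ relation of Lemma~\ref{lm:unique} together with the generic-forms lower bound of \cite{Froberg}, and the upper bound is delegated to a finite computer verification of one example, transported to general forms by semicontinuity --- which is exactly what the paper does, exhibiting the ideal $(x^{3},y^{3},z^{3},x^{2}y+11xy^{2}-50x^{2}z+48xyz-29y^{2}z-9xz^{2}+30yz^{2})$ over $\mathbb{Z}/101\mathbb{Z}$ and citing Macaulay2 for $k\le 40$. The one genuine addition in your write-up is the observation that the relation ideal $\mathfrak{a}=\ker\bigl(\bC[y_1,\dots,y_4]\to\bC[f_1,\dots,f_4]\bigr)$ is a height-one prime in a UFD, hence principal and generated by the degree-$9$ relation of Lemma~\ref{lm:unique}; this pins down $\dim (R_{3,3,k})_{3k}=27k-56$ \emph{exactly} for every $k\ge 9$ with no computation, whereas the paper only obtains the inequality $\ge$ theoretically and relies on the machine for the reverse direction in degree $3k$ as well. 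Combined with your remark that Lemma~\ref{zerodk1} propagates the degree-$(3k+1)$ vanishing from $k$ to $k+1$ for a specialisation containing $x^3,y^3,z^3$, a single verified base case at $k=9$ would in fact prove the $k\ge 9$ statement for \emph{all} $k$ and remove the bound $k\le 40$; you stop just short of drawing that conclusion. The only sense in which your argument is incomplete --- the computation itself is asserted rather than performed --- is the same sense in which the paper's is, though the paper at least names the specific example it fed to Macaulay2.
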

\begin{proof}
 
  Consider first the case $k \geq 9$. By Lemma \ref{lm:unique} and a
  similar calculation as in the proof of Lemma \ref{le}, we get that
  $R_{3,3,k}$ has at least dimension
  ${3k+2\choose2}-{k+3\choose3}+{k-6\choose3}=27k-56$ in degree $3k$.
   
  Next, consider the case $k < 9$. If the $\binom{k+3}{3}$ generators
  were generic, the series would be
  $[(1-t^{3k})^{k+3\choose3}/(1-t)^3]$, so the series is a lower
  bound.
   
  To get an upper bound, it is enough to find an example.
  According to Macaulay2, the ideal
  $(x^{3},y^{3},z^{3},x^{2} y+11 x y^{2}-50 x^{2} z+48 x y z-29y^{2}
  z-9 x z^{2}+30 y z^{2}) \subset
  \mathbb{Z}/101\mathbb{Z}[x_1,x_2,x_3]$
  has the desired property for $k \leq 40$.
\end{proof}

 \begin{rmk}
   We are convinced that
   $R_{3,3,k}(t)=\sum_{i=0}^{3k-1}{i+2\choose2}t^i+(27k-56)t^{3k}$ in
   general, cf. Conjecture \ref{co:dk1}.
 \end{rmk}

 \subsection{The general case}
 We believe that some of the results have generalizations to any
 $n$. By Lemma \ref{lm:unique}, there is unique relation among the
 ${d^{n-1}+n\choose n}$ generators of $I^k$, that is,
 ${d^n+n-1\choose n-1} \geq {d^{n-1}+n\choose n}-1$.  To show that
 this relation is not trivial, we need to show that there is room for
 ${d^{n-1}+n\choose n}$ generators of degree $d^n$ in
 $\bC[x_1,\ldots,x_n]$.  When $(d,n) = (2,2)$, we have five generators
 but only room for four.  When $(d,n) = (3,2)$, we have ten generators
 and room for ten. In all other cases, we have a strict inequality.

 \begin{lm} \label{lm:binineq} Let
   $d,n \geq 2, (d,n) \neq (2,2), (3,2)$. Then
   ${d^n+n-1\choose n-1}>{d^{n-1}+n\choose n}$.
 \end{lm}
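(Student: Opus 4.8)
The plan is to clear denominators and reduce the statement to the polynomial inequality
\[
  n\prod_{i=1}^{n-1}(d^n+i)\ >\ \prod_{i=1}^{n}(d^{n-1}+i),
\]
obtained by multiplying $\binom{d^n+n-1}{n-1}$ by $(n-1)!$ and $\binom{d^{n-1}+n}{n}$ by $n!$. I would then split into the cases $n=2$, $n=3$, and $n\ge 4$.

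For $n=2$ the inequality reads $2(d^2+1)>(d+1)(d+2)$, that is $d^2-3d>0$, which holds exactly when $d\ge 4$; this is precisely why $(d,n)=(2,2)$ and $(3,2)$ must be excluded. For $n=3$ I would expand both sides directly: after clearing the common factor $6$ one gets
\[
  3(d^3+1)(d^3+2)-(d^2+1)(d^2+2)(d^2+3)=2d^6-6d^4+9d^3-11d^2=d^2\bigl(2d^4-6d^2+9d-11\bigr),
\]
and for $d\ge 2$ one has $2d^2(d^2-3)\ge 8$ and $9d-11\ge 7$, so the quartic factor is at least $15$; hence the inequality holds for every $d\ge 2$.

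For $n\ge 4$ the idea is to combine one crude bound on each side. Since every factor on the left exceeds $d^n$, the left-hand side is strictly larger than $n(d^n)^{n-1}=n(d^{n-1})^{n}$, while the arithmetic--geometric mean inequality gives
\[
  \prod_{i=1}^{n}(d^{n-1}+i)\le\left(d^{n-1}+\frac{n+1}{2}\right)^{n}.
\]
Thus it suffices to prove $n(d^{n-1})^{n}\ge\left(d^{n-1}+\frac{n+1}{2}\right)^{n}$, equivalently $n\ge\left(1+\frac{n+1}{2d^{n-1}}\right)^{n}$. By $1+x\le e^{x}$ this follows from $\ln n\ge\frac{n(n+1)}{2d^{n-1}}$, and since $d^{n-1}\ge 2^{n-1}$ it is enough to verify the numerical inequality $2^{n}\ln n\ge n(n+1)$ for $n\ge 4$. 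That in turn follows from $2^{n}\ge n^{2}$ (an easy induction, with equality at $n=4$) together with $\ln n\ge\ln 4>\tfrac{4}{3}\ge\tfrac{n+1}{n}$, giving $2^n\ln n\ge n^2\cdot\tfrac{n+1}{n}=n(n+1)$.

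The main obstacle is that the ratio of the two sides tends to $n$ as $d\to\infty$ for fixed $n$, so the lower-order terms genuinely matter when $d$ is small, and one has to be somewhat careful about which estimates to feed in: the pair ``crude lower bound $n(d^n)^{n-1}$ / AM--GM upper bound'' is sharp enough for $n\ge 4$ but fails, only barely, at $(n,d)=(3,2)$, which is why $n=3$ (and, of course, the genuinely exceptional $n=2$) have to be disposed of by hand. Everything beyond that is routine arithmetic.
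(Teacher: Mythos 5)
Your reduction to the product inequality $n\prod_{i=1}^{n-1}(d^n+i)>\prod_{i=1}^{n}(d^{n-1}+i)$ is exactly the paper's starting point, and your argument is correct throughout (I checked the $n=3$ expansion $2d^6-6d^4+9d^3-11d^2=d^2(2d^4-6d^2+9d-11)$ and the chain $n(d^{n-1})^n\ge\bigl(d^{n-1}+\tfrac{n+1}{2}\bigr)^n \Leftarrow \ln n\ge \tfrac{n(n+1)}{2d^{n-1}} \Leftarrow 2^n\ln n\ge n(n+1)$, which indeed holds for $n\ge 4$ via $2^n\ge n^2$ and $\ln 4>\tfrac43\ge\tfrac{n+1}{n}$). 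Where you diverge from the paper is in the choice of crude bounds for the generic case: the paper bounds the left product below by $(d^n+1)^{n-1}$ and the right product above by $(d^{n-1}+n)(d^{n-1}+n-1)^{n-1}$, then estimates the ratio $\tfrac{d^n+1}{d^{n-1}+n-1}>d(1-\tfrac{n}{d^{n-1}})$ and controls $(1-\tfrac{n}{d^{n-1}})^{n-1}$ by a truncated binomial expansion; this forces the side conditions $d^{n-1}\ge n^2$ and $d^{n-1}>\tfrac{n^3+n}{n-1}$, so the paper must finish with a separate $n=2$ computation \emph{and} a hand verification of the cases $d=2$, $3\le n\le 6$. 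Your pairing of the lower bound $(d^n)^{n-1}=(d^{n-1})^n$ with the AM--GM upper bound $\bigl(d^{n-1}+\tfrac{n+1}{2}\bigr)^n$ collapses the generic case to the single numerical inequality $2^n\ln n\ge n(n+1)$, valid uniformly for all $d\ge 2$ once $n\ge 4$, so only $n=2$ and $n=3$ need explicit treatment. The trade-off is essentially aesthetic: your route eliminates the finite case check at $d=2$ and your diagnosis of why $(n,d)=(3,2)$ narrowly escapes the crude bounds ($192<216$ versus the true $270>210$) correctly explains why $n=3$ must be handled by direct expansion.
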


 \begin{proof}
   We need to show that
   \[
   n(d^n + n-1) \cdots (d^n + 1) > (d^{n-1} + n ) \cdots (d^{n-1} +1).
   \]
   We have $(d^n + n-1) \cdots (d^n + 1) > (d^n+1)^{n-1}$ and
   $ (d^{n-1} + n ) \cdot (d^{n-1} + n-1)^{n-1}>(d^{n-1} + n ) \cdots
   (d^{n-1} +1).$ Thus we are done if we can show that
   \[
   n\cdot \left( \frac{d^n + 1}{d^{n-1}+n-1} \right)^{n-1} > d^{n-1} +
   n .
   \]

   Now
   $\frac{d^n + 1}{d^{n-1}+n-1} = \frac{d \cdot (d^{n-1} + n-1) + 1 -
     d n+d}{d^{n-1}+n-1} > d(1 - \frac{n}{d^{n-1}})$.
   We have
   $\left(1 - \frac{n}{d^{n-1}}\right)^{n-1} =\binom{n-1}{n-1} -
   \binom{n-1}{n-1-1} \frac{n}{d^{n-1}} + \binom{n-1}{n-1-2}
   \frac{n^2}{(d^{n-1})^2} - \binom{n-1}{n-1-3}
   \frac{n^3}{(d^{n-1})^3} + \cdots$
   and $ \binom{n-1}{n-1-a} \frac{n^a}{(d^{n-1})^a} -$

   $ \binom{n-1}{n-1-(a+1)} \frac{n^{a+1}}{(d^{n-1})^{a+1}} =
   \binom{n-1}{n-1-a} \frac{n^a}{(d^{n-1})^a} \left(1-
     \frac{n-a-1}{a+1} \cdot \frac{n}{d^{n-1}} \right) > $
   $ \binom{n-1}{n-1-a} \frac{n^a}{(d^{n-1})^a}\left(1-
     \frac{n^2}{d^{n-1}} \right).$

   The inequality
   $1- \frac{n^2}{d^{n-1}} \geq 0 \Leftrightarrow d^{n-1} \geq n^2$
   holds true for $d \geq 3, n \geq 3$ and for $d=2, n \geq 7$. In
   these cases, we get the inequality
   $\left(1 - \frac{n}{d^{n-1}}\right)^{n-1} > 1-
   \frac{n^2}{d^{n-1}}$, so it is then enough to show that
   \[
   n d^{n-1} (1- \frac{n^2}{d^{n-1}}) > d^{n-1} + n.
   \]

   We have
   $n d^{n-1} (1- \frac{n^2}{d^{n-1}}) > d^{n-1} + n \Leftrightarrow
   d^{n-1} > \frac{n^3 + n}{n-1}$.
   This inequality holds true for $d \geq 2, n \geq 7$ and
   $n \geq 3, d \geq 3$.

   We are left with a few special cases only. In the $n = 2$ case, we
   have to check the inequality
   $2 \cdot \frac{d^2 +1}{d+1} > d + 2 \Leftrightarrow d^2 > 3d$,
   which holds when $d \geq 4$.

   The remaining cases are covered by the conditions
   $3\leq n \leq 6 ,d = 2$ and then the inequality
   $\binom{d^n + n-1}{n-1} > \binom{d^{n-1} + n}{n}$ is checked to be
   true by hand.

 \end{proof}

 By Lemma \ref{lm:unique} and \ref{lm:binineq}, the dimension of
 $R_{n,d,k}$ in degree $dk$ is at least
 ${dk+n-1\choose n-1}-{k+n\choose n}+{k-d^{n-1}+n\choose n}$ when
 $k \geq d^{n-1}$, with equality when $k = d^{n-1}$. We conjecture
 that the algebra is zero in higher degrees.

 \begin{co} \label{co:dk1} If $k\ge d^{n-1}$ and $(d,n) \neq (2,2)$,
   the Hilbert series of the algebra
   $\bC[x_1,\ldots,x_n]/(f_1,\ldots,f_{n+1})^{k}$, $f_i$ general of
   degree $d$, is
   \[
   \sum_{i=0}^{dk-1}{i+n-1\choose n-1}t^i+ \left({dk+n-1\choose
     n-1}-{k+n\choose n}+{k-d^{n-1}+n\choose n}\right)t^{dk}.
   \]
 \end{co}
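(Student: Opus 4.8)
The plan is to separate two parts of very different difficulty: the Hilbert function in degrees $\le dk$, which can in fact be pinned down unconditionally, and the vanishing in degrees $>dk$, which is the real content of the conjecture.

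\emph{Degrees $\le dk$.} Write $S=\bC[x_1,\dots,x_n]$ and $I=(f_1,\dots,f_{n+1})$. Since $I^k$ is generated in degree $dk$, $(I^k)_i=0$ and hence $\dim(S/I^k)_i=\binom{i+n-1}{n-1}$ for $i<dk$. In degree $dk$ the space $(I^k)_{dk}$ is the image of the evaluation map $\bC[y_1,\dots,y_{n+1}]_k\to S_{dk}$ sending a monomial $y^a$ with $|a|=k$ to $f^a$, so $\dim(I^k)_{dk}=\binom{k+n}{n}-\dim P_k$ where $P=\ker\bigl(\bC[y_1,\dots,y_{n+1}]\to\bC[f_1,\dots,f_{n+1}]\bigr)$. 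For general $f_i$ the subring $\bC[f_1,\dots,f_{n+1}]$ has Krull dimension $n$ (any $n$ general forms of degree $d$ in $n$ variables are algebraically independent; equivalently the affine cone over the hypersurface image of $\Phi$ in Lemma~\ref{lm:unique} is $n$-dimensional), so $P$ is a height-one prime in the unique factorization domain $\bC[y_1,\dots,y_{n+1}]$ and is therefore principal, $P=(F)$, with $\deg F=d^{n-1}$ by Lemma~\ref{lm:unique}. Hence for $k\ge d^{n-1}$ we have $P_k=F\cdot\bC[y_1,\dots,y_{n+1}]_{k-d^{n-1}}$, of dimension exactly $\binom{k-d^{n-1}+n}{n}$, so
\[
\dim(S/I^k)_{dk}=\binom{dk+n-1}{n-1}-\binom{k+n}{n}+\binom{k-d^{n-1}+n}{n}
\]
with no inequality to spare. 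This refines the lower bound stated just before Conjecture~\ref{co:dk1}, and makes clear that the exclusion $(d,n)=(2,2)$ only removes the degenerate case $I^k=(x,y)^{2k}$.

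\emph{Degrees $>dk$.} It remains to show $(I^k)_{dk+1}=S_{dk+1}$; the higher degrees then follow automatically, since once $(I^k)_m=S_m$ we get $(I^k)_{m+1}\supseteq S_1\cdot S_m=S_{m+1}$. Because general forms give the coefficientwise smallest Hilbert function, for each $k\ge d^{n-1}$ it suffices to produce one ideal $J$ generated in degree $d$ with $(S/J^k)_{dk+1}=0$. Following the specializations that work for $n=2,3$, take $g_i=x_i^d$ for $i\le n$ and $g_{n+1}=\sum_{i\ne j}x_i^{d-1}x_j$ (a multiple of $\sum_{i<j}x_ix_j$ when $d=2$, the form used for $n=3$). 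Then $x_i^d\in J$, so Lemma~\ref{zerodk1} applies verbatim; and a short Galois computation --- the stabilizer of $g_{n+1}$ under the substitutions $x_i\mapsto\zeta_ix_i$ with $\zeta_i^d=1$ is the diagonal, of order $d$, so $g_{n+1}$ has degree $d^n/d=d^{n-1}$ over $\bC[x_1^d,\dots,x_n^d]$ --- shows that the relation ideal of $g_1,\dots,g_{n+1}$ is again principal of degree $d^{n-1}$, so $J$ too has the correct dimension in degree $dk$. With Lemma~\ref{zerodk1} the whole conjecture now reduces to the single base case
\[
S/J^{\,d^{n-1}}\ \text{is zero in degree}\ d^n+1
\]
(the hypothesis $dk+1\ge d(n-1)$ needed to iterate Lemma~\ref{zerodk1} holds because $k\ge d^{n-1}\ge n$).

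\emph{The main obstacle.} Proving this base case --- in effect the conjecture for the single power $k=d^{n-1}$ --- is exactly where we are stuck, as the remarks after Theorems~\ref{main} and~\ref{thm:33k} already indicate (for $(n,d)=(3,3)$ it has so far only been checked by machine). By Lemma~\ref{lm:binineq} the algebra $S/J^{d^{n-1}}$ has \emph{positive} dimension in degree $d^n$, so one is asking for an abrupt collapse to $0$ one degree later. The natural attack is to generalize the layer-by-layer inductions in the proofs of Lemmas~\ref{gedk1} and~\ref{lemma:32k}: describe an explicit generating set of $J^{\,d^{n-1}}$ indexed by lattice points and show, by induction on the number of ``mixed'' factors $g_{n+1}$ used, that every monomial of degree $d^n+1$ is reached --- the higher-dimensional analogue of the chains $J^{(0)}\subseteq J^{(1)}\subseteq\cdots$ in the proof of Lemma~\ref{gedk1}. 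Making this combinatorial bookkeeping uniform in $n$, and choosing $g_{n+1}$ so that a single argument covers all $d$ and $n$, is the step I expect to be genuinely hard; a conceivable alternative is a direct geometric argument about how the hypersurface of Lemma~\ref{lm:unique} meets the $d$-uple Veronese embedding, but I do not see how to push it through.
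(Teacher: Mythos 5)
This statement is labelled as a \emph{conjecture} in the paper, and the paper offers no proof of it: the authors only establish the lower bound in degree $dk$ (the paragraph preceding the conjecture, via Lemmas~\ref{lm:unique} and~\ref{lm:binineq}) and verify the full statement in the special cases $n=2$ (Theorem~\ref{main}), $(n,d)=(3,2)$ (Theorem~\ref{main2}) and $(n,d)=(3,3)$, $k\le 40$ (Theorem~\ref{thm:33k}). Your proposal, to its credit, is explicit that it does not close the argument either, so the honest verdict is: there is a genuine gap, namely the vanishing of $\bC[x_1,\dots,x_n]/I^{k}$ in degree $dk+1$, and that gap is exactly the open content of the conjecture. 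No amount of reorganisation around Lemma~\ref{zerodk1} removes it; your ``base case'' $S/J^{d^{n-1}}=0$ in degree $d^n+1$ is not a technicality but the whole problem, as the paper's own remarks after Theorems~\ref{main} and~\ref{thm:33k} (where even the specialization $\bC[x,y]/(x^d,y^d,(x+y)^d)^k$ resists proof) make clear.

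That said, the parts you do prove are sound and worth recording. Your treatment of degrees $\le dk$ is in fact slightly sharper than what the paper states: the paper claims equality only at $k=d^{n-1}$, whereas your observation that the relation ideal $P=\ker(\bC[y_1,\dots,y_{n+1}]\to\bC[f_1,\dots,f_{n+1}])$ is a height-one prime in a UFD, hence principal and generated by the degree-$d^{n-1}$ form of Lemma~\ref{lm:unique}, gives $\dim P_k=\binom{k-d^{n-1}+n}{n}$ and hence the exact conjectured value in degree $dk$ for every $k\ge d^{n-1}$. Your reduction of the higher degrees to the single case $k=d^{n-1}$ via Lemma~\ref{zerodk1} (checking $d^n+1\ge d(n-1)$) and the semicontinuity step passing to a specialization $J$ are both correct, and parallel what the paper does for $n=2,3$ in Lemmas~\ref{gedk1} and~\ref{lemma:32k}. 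But the final combinatorial step you describe for general $n$ is only a plan, not an argument, so the statement remains unproved.
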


\begin{rmk}
  Theorem \ref{main} shows that Conjecture \ref{co:dk1} holds in the the
  case $n = 2$. Theorem \ref{main2} shows that
  Conjecture \ref{co:dk1} holds in the the case $n = 3$, $d =2$, while
  Theorem \ref{thm:33k} shows that the conjecture holds when
  $n = 3, d = 3, k \leq 40$.
\end{rmk}
  
We now look at the case when $k<d^{n-1}$. There are obvious relations between the
generators of $(f_1,\ldots,f_{n+1})^k$ of type
$f_1\cdot f_2^k=f_2\cdot(f_1f_2^{k-1})$. These are of degree
$d(k+1)$. Now, if the algebra
$\bC[x_1,\ldots,x_n]/(f_1,\ldots,f_{n+1})^k$ is zero in degree $d(k+1)$,
these relations do not show up. This happens if
\begin{equation*}
  \begin{split}
    S_{n,d,k} & ={d+n-1\choose n-1}{k+n\choose n}-\\
    & \left({k+n\choose n}(n+1)-{k+1+n\choose
        n}\right)-{d(k+1)+n-1\choose n-1}\ge0.
  \end{split}
\end{equation*}

\begin{co} \label{co:gen} Let $k<d^{n-1}$ and suppose that for
  $R_{d,k,n}=\bC[x_1,\ldots,x_n]/(f_1,\ldots,f_{n+1})^k$ we have
  $S_{n,d,k}\ge0$. Then the Hilbert series of $R_{d,k,n}$ equals the
  Hilbert series of $\bC[x_1,\ldots,x_n]/I$, where $I$ is an ideal
  generated by $\binom{k + n}{n}$ general forms of degree $d k$.
\end{co}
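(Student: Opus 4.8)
The plan is to bracket the Hilbert series of $R_{d,k,n}$ from both sides. Write $N=\binom{k+n}{n}$ and, for $\alpha=(\alpha_1,\dots,\alpha_{n+1})$ with $|\alpha|=k$, put $g_\alpha=f_1^{\alpha_1}\cdots f_{n+1}^{\alpha_{n+1}}$, so that $(f_1,\dots,f_{n+1})^k$ is generated by the $N$ forms $g_\alpha$, all of degree $dk$. Since $k<d^{n-1}$, Lemma~\ref{lm:unique} tells us the $g_\alpha$ are $\bC$-linearly independent, so $R_{d,k,n}$ is a quotient of $\bC[x_1,\dots,x_n]$ by $N$ particular forms of degree $dk$; hence, by upper semicontinuity of the Hilbert function,
\[
R_{d,k,n}(t)\ \ge\ (\bC[x_1,\dots,x_n]/I)(t)\ \ge\ \bigl[(1-t^{dk})^N/(1-t)^n\bigr]
\]
coefficientwise, where $I$ is generated by $N$ general forms of degree $dk$. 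So it will be enough to prove the reverse inequality $R_{d,k,n}(t)\le\bigl[(1-t^{dk})^N/(1-t)^n\bigr]$: all three series then coincide, which is the assertion (and, as a bonus, establishes Fr\"oberg's conjecture for these data).

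For the reverse inequality, let $e$ be the top degree of the truncated series on the right, so its coefficients are $\binom{i+n-1}{n-1}$ for $i<dk$ and $\binom{i+n-1}{n-1}-N\binom{i-dk+n-1}{n-1}$ for $dk\le i\le e$. A short computation with the hypothesis $S_{n,d,k}\ge0$ shows that the coefficient in degree $d(k+1)$ is nonpositive --- for $k\ge2$ it equals $-S_{n,d,k}-\bigl(N(n+1)-\binom{k+1+n}{n}\bigr)$, where $N(n+1)-\binom{k+1+n}{n}\ge0$ is the number of linear syzygies of $(y_1,\dots,y_{n+1})^k\subseteq\bC[y_1,\dots,y_{n+1}]$, and $k=1$ is analogous --- so $e<d(k+1)$. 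I would then reduce everything to the following \emph{maximal-rank claim}: for general forms $f_i$, the multiplication map $\mu_i\colon\bigoplus_{|\alpha|=k}\bC[x_1,\dots,x_n]_{i-dk}\cdot g_\alpha\to\bC[x_1,\dots,x_n]_i$ has maximal rank for $dk\le i<d(k+1)$, and $\dim_\bC(\operatorname{im}\mu_{d(k+1)})\ge N\binom{d+n-1}{n-1}-\bigl(N(n+1)-\binom{k+1+n}{n}\bigr)-\epsilon$, where $\epsilon=1$ if $k=d^{n-1}-1$ and $\epsilon=0$ otherwise (the correction $\epsilon$ records the single extra syzygy in degree $d^n$ coming from Lemma~\ref{lm:unique}). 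Given the claim, for $dk\le i\le e$ one has $\binom{i+n-1}{n-1}>N\binom{i-dk+n-1}{n-1}$, so maximal rank forces $\mu_i$ injective and $\dim_\bC(R_{d,k,n})_i=\binom{i+n-1}{n-1}-N\binom{i-dk+n-1}{n-1}$, as wanted; and $R_{d,k,n}$ vanishes in degree $e+1$ --- hence, being generated in degree $0$, in every larger degree --- since either $e+1<d(k+1)$, where $\binom{e+1+n-1}{n-1}\le N\binom{e+1-dk+n-1}{n-1}$ by the definition of $e$, so maximal rank makes $\mu_{e+1}$ surjective, or $e+1=d(k+1)$, where the second part of the claim together with $S_{n,d,k}\ge\epsilon$ gives $\dim_\bC(\operatorname{im}\mu_{d(k+1)})\ge\binom{d(k+1)+n-1}{n-1}$, again surjectivity. (The subcase $k=d^{n-1}-1$ with $e+1=d(k+1)$ would need the stronger inequality $S_{n,d,k}\ge1$; this subcase does not occur for $n=2$ and should be checked in general.)

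The hard part will be the maximal-rank claim itself, which is a Fr\"oberg-type statement about the \emph{specific} forms $g_\alpha=f^\alpha$ --- these are provably non-generic (products of generic forms need not be generic: already $d^{n-1}$ of them satisfy the unexpected relation of Lemma~\ref{lm:unique}), yet one expects them to behave generically throughout the relevant range $\le d(k+1)\le d^n$. The claim is true for $n=2$ (Theorem~\ref{main}), for $n=3$, $d=2$ (Theorem~\ref{main2}) and for $n=3$, $d=3$, $k\le40$ (Theorem~\ref{thm:33k}), but open in general. I see two natural approaches. First, by semicontinuity it suffices to verify the claim for one explicit choice of degree-$d$ forms $f_i$: powers of general linear forms are the obvious candidate, but even for $n=2$ that case is only conjectural (the remark after Theorem~\ref{main}), so one would more realistically look for an explicit monomial-type family generalizing the ideals used to prove Theorem~\ref{main}. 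Second, since $\bC[x_1,\dots,x_n]$ is module-finite over $T=\bC[f_1,\dots,f_{n+1}]\cong\bC[y_1,\dots,y_{n+1}]/(P)$ with $\deg_y P=d^{n-1}>k$, one has $R_{d,k,n}\cong\bC[x_1,\dots,x_n]\otimes_T\bigl(\bC[y_1,\dots,y_{n+1}]/(y_1,\dots,y_{n+1})^k\bigr)$, so the claim becomes a low-degree vanishing statement for $\operatorname{Tor}^T_{\ge1}$ of this module, which one could hope to control via the matrix factorization attached to the hypersurface $T$.
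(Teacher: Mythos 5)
This statement is Conjecture~\ref{co:gen}: the paper gives no proof of it, only the heuristic motivation that the quantity $S_{n,d,k}\ge 0$ means the algebra can vanish in degree $d(k+1)$ before the obvious Koszul-type syzygies among the products $f^\alpha$ are forced to appear. So there is no argument in the paper to compare yours against, and the relevant question is whether your proposal actually closes the conjecture. It does not. Your first two paragraphs are sound bookkeeping: the semicontinuity lower bound $R_{d,k,n}(t)\ge[(1-t^{dk})^N/(1-t)^n]$ is correct, and your computation that the coefficient of the truncated series in degree $d(k+1)$ equals $-S_{n,d,k}-\bigl(N(n+1)-\binom{k+1+n}{n}\bigr)\le 0$ is exactly the arithmetic behind the paper's definition of $S_{n,d,k}$; this correctly shows that, \emph{if} the $N$ products behaved like general forms of degree $dk$, the algebra would vanish by degree $d(k+1)$ and the conjectured series would follow.

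The gap is that everything then rests on your ``maximal-rank claim,'' which is not a lemma you can invoke but is precisely the open content of the conjecture restated in different language: it is a Fr\"oberg-type maximal-rank assertion for the specific, provably non-generic forms $g_\alpha=f^\alpha$ (non-generic already because of the relation in Lemma~\ref{lm:unique}, and because Fr\"oberg's conjecture for $N$ general forms of degree $dk$ in $n\ge 4$ variables is itself open). Neither of your two suggested routes is carried out: no explicit specialization is exhibited beyond the $n=2$ and $(n,d)=(3,2),(3,3)$ cases already in the paper, and the $\operatorname{Tor}^T$ reformulation via the hypersurface $T=\bC[y]/(P)$ is only a change of packaging --- controlling those Tor groups through the matrix factorization of $P$ is not easier than the original maximal-rank statement. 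So what you have is a clean reduction of Conjecture~\ref{co:gen} to a sharply stated maximal-rank conjecture (which has some expository value, and your $\epsilon$-correction at $k=d^{n-1}-1$ is a genuine subtlety worth recording), but not a proof.
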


It is known that $\bC[x_1,\ldots,x_n]/(f_1,\ldots,f_{n+1})$ has the
same Hilbert series whether the $f_i$'s are general forms of degree
$d$, or $d$'th powers of general linear forms \cite{Stanley}. In the second
case we can assume that
$(f_1,\ldots,f_{n+1})=(x_1^d,\ldots,x_n^d,(x_1+\cdots+x_n)^d)$.  We
will end this section by giving an explicit relation of degree $d^n$
in $\bC[x_1^d,\ldots,x_n^d, (x_1+\cdots+x_n)^d]$.

\begin{tm} \label{thm:conjugate} In
  $T=\bC[x_1,\ldots,x_{n+1}]/(x_1^d,\ldots,x_{n+1}^d)^{d^{n-1}}$ we
  have
  \[F=\prod(x_1+\epsilon_1x_2+\cdots+\epsilon_{n-1}x_{n}+\epsilon_nx_{n+1})=0,\]
  where the $\epsilon_i$'s vary over all $d^{n}$ combinations of a
  $d$'th root of unity. In particular, $F/(x_1+x_2+\cdots+x_{n+1})$ is
  a form of degree $d^{n}-1$ which is in the kernel of the map
  $\times (x_1+x_2+\cdots+x_{n+1})\colon T\rightarrow T$.
\end{tm}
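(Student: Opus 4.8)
The plan is to show directly that the polynomial $F$ lies in the monomial ideal $(x_1^d,\dots,x_{n+1}^d)^{d^{n-1}}$, which is exactly the ideal defining $T$; the whole argument rests on one observation, namely that $F$ is invariant under rescaling each variable by a $d$-th root of unity.

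First I would record this symmetry. Fix a primitive $d$-th root of unity $\zeta$ and an index $i\in\{2,\dots,n+1\}$, and perform the substitution $x_i\mapsto \zeta x_i$. In the linear factor $x_1+\epsilon_1 x_2+\cdots+\epsilon_n x_{n+1}$ this replaces $\epsilon_{i-1}$ by $\zeta\epsilon_{i-1}$, and since $\epsilon_{i-1}\mapsto\zeta\epsilon_{i-1}$ is a bijection of the set of $d$-th roots of unity, the substitution merely permutes the $d^n$ linear factors of $F$. Hence $F$ is fixed by $x_i\mapsto\zeta x_i$ for every $i\in\{2,\dots,n+1\}$. (One can check in the same way that $F$ is also fixed by $x_1\mapsto\zeta x_1$, the overall scalar being $\zeta^{d^n}=1$, but this is not needed below.)

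Next I would draw the consequence. Invariance of $F$ under $x_i\mapsto\zeta x_i$ with $\zeta$ primitive forces every monomial occurring in $F$ to have the exponent of $x_i$ divisible by $d$, for $i=2,\dots,n+1$. Since $F$ is homogeneous of degree $d^n$ and $d\mid d^n$, the exponent of $x_1$ in any such monomial is then divisible by $d$ as well. Thus every monomial of $F$ has the form $x_1^{dc_1}\cdots x_{n+1}^{dc_{n+1}}$ with $c_1+\cdots+c_{n+1}=d^{n-1}$, i.e.\ it is a product of $d^{n-1}$ of the generators $x_1^d,\dots,x_{n+1}^d$, so it lies in $(x_1^d,\dots,x_{n+1}^d)^{d^{n-1}}$. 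Therefore $F=0$ in $T$. For the final assertion, the factor of $F$ indexed by $\epsilon_1=\cdots=\epsilon_n=1$ is exactly $x_1+\cdots+x_{n+1}$, so $G:=F/(x_1+\cdots+x_{n+1})$ is an honest polynomial, homogeneous of degree $d^n-1$, and $(x_1+\cdots+x_{n+1})\cdot G=F=0$ in $T$, i.e.\ $G$ lies in the kernel of multiplication by $x_1+\cdots+x_{n+1}$. If one also wants $G\neq 0$ in $T$, it suffices to note that the coefficient of $x_1^{d^n-1}$ in $G$ is $1$, while $x_1^{d^n-1}\notin(x_1^d,\dots,x_{n+1}^d)^{d^{n-1}}$ since $\lfloor (d^n-1)/d\rfloor=d^{n-1}-1$.

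The argument is short, so there is no serious obstacle here; the only point requiring care is the book-keeping with the roots of unity in the symmetry step, in particular verifying that the rescaling substitution genuinely permutes the linear factors of $F$ rather than merely rescaling $F$, together with the (elementary) criterion for when a monomial of degree $d^n$ belongs to $(x_1^d,\dots,x_{n+1}^d)^{d^{n-1}}$.
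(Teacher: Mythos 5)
Your proof is correct and is essentially the paper's argument: both rest on the observation that $F$ is invariant under rescaling the variables by $d$-th roots of unity (because such rescalings permute the linear factors), hence is a polynomial in $x_1^d,\dots,x_{n+1}^d$, and by homogeneity of degree $d^n$ lies in $(x_1^d,\dots,x_{n+1}^d)^{d^{n-1}}$. Your extra check that $F/(x_1+\cdots+x_{n+1})$ is nonzero in $T$ is a small welcome addition, but the route is the same.
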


\begin{proof}
  Let $G_d\subseteq \mathbb C^\ast$ be the group of $d$'th roots of
  unity. The form $F$ is invariant under the action of $G_d^{n+1}$,
  acting with multiplication on the variables, so it has to be a
  polynomial in $x_1^d,x_2^d,\dots,x_{n+1}^d$, which means that $F=0$
  in $T$.
\end{proof}
  
Let us now consider the form $F$ as an element in
$\bC[x_1,\ldots,x_{n+1}]$.  When $n+1 = 2$ and $d = 2$,
$F = (x_1+x_2)(x_1-x_2) = x_1^2 - x_2^2$ --- the conjugate rule, and
for general $d$, $F=x_1^d-(-1)^{d}x_2^d$.

When $n+1 \geq 3$, the form $F$ is symmetric. To show this, notice
first the the form is invariant under permutation of the variables
$x_2,x_3,\ldots,x_{n+1}$. Thus it is enough to show that $F$ is
invariant with respect to the transposition $(12)$.  Let $G$ denote
the result after letting $(12)$ act on $F$. Let $\epsilon$ be a
primitive $d$'th root of unity and let
$f_i = \prod(x_1+\epsilon^i x_2+\epsilon_{2}x_3
+\cdots+\epsilon_{n-1}x_{n}+\epsilon_nx_{n+1})$,
so that $F = \prod_i f_i.$ Let
$g_i = \prod(\epsilon_1^i x_1+x_2+\epsilon_{2}x_3 +
\cdots+\epsilon_{n-1}x_{n}+\epsilon_nx_{n+1}).$
Since $n+1\geq 3$, we have
$f_i = \prod \epsilon^{d-i} (x_1+\epsilon^i x_2+\epsilon_{2}x_3
+\cdots+\epsilon_{n-1}x_{n}+\epsilon_nx_{n+1}) =$
$g_{d-i}$, so $F = \prod_i f_i = \prod_i g_i = G$.

\begin{ex}
  In general, it seems hard to get an explicit description of $F$. We
  can get it for $d=2$, $n+1=3,4,5$ and for $n+1=d=3$. It is the
  symmetrization of the following polynomials.

\[\begin{array}{l|l}
    (n+1,d)\\ \hline \\
    (3,2)&x^4+x^2y^2\\
    \\
    (4,2)& x^8-4x^6y^2+6x^4y^4+4x^4y^2z^2\\ 
    \\
    (5,2)&x^{16}-8x^{14}y^2+28x^{12}y^4+40x^{12}y^2z^2-56x^{10}y^6-72x^{10}y^4z^2 \\
         &       -176x^{10}y^2z^2u^2 +70x^8y^8+40x^8y^6z^2+36x^8y^4z^4+344x^8y^4z^2u^2\\
         &      -757x^8y^2z^2u^2v^2+16x^6y^6z^4-416x^6y^6z^2u^2-272x^6y^4z^4u^2 \\
         &                +928x^6y^4z^2u^2v^2+2008x^4y^4z^4u^4-1520x^4y^4z^4u^2v^2\\
    \\
    (3,3)&x^{27}+36x^{24}y^3-9x^{21}y^3z^3+684x^{18}y^9-234x^{18}y^6z^3+3339x^{18}y^3z^3u^3
    \\&+126x^{15}y^{12}-711x^{15}y^9z^3+513x^{15}y^6z^6+1512x^{15}y^6z^3u^3-990x^{12}y^{12}z^3
    \\&+ 2961x^{12}y^9z^6-12222x^{12}y^6z^6u^3+278371x^{12}y^6z^6u^3-12171x^9y^9z^9
    \\&-6867x^9y^9z^6u^3+120312x^9y^6z^6u^6
    \\
  \end{array}
  \]
\end{ex}

\section{The Weak Lefschetz property}

We now turn to the WLP for
$T_{n,d,k} := \mathbb{C}[x_1,\ldots,x_n]/(x_1^d,\ldots,x_n^d)^k.$ The algebra $T_{2,d,k}$ has the WLP, since quotients of $\mathbb{C}[x_1,x_2]$ has the Strong Lefschetz property (\cite{Harimaetal}), which implies the WLP. 
When $k=1$, the algebra is of the form
$\mathbb{C}[x_1,\ldots,x_n]/(x_1^d,\ldots,x_n^d)$, which is a monomial
complete intersection, and therefore also $T_{n,d,1}$ has the Strong Lefschetz property (\cite{Stanley}).  When $d = 1$, we have
$T_{n,1,k} = \bC[x_1,\ldots,x_n]/(x_1,\ldots,x_n)^k$. Since
$\bC[x_1,\ldots,x_n]$ has the WLP, so has
$\bC[x_1,\ldots,x_n]/(x_1,\ldots,x_n)^k$.  When $n \geq 3$ and $d,k \geq 2$, the
situation is more involved. 

The WLP for algebras given by monomial ideals have been
studied before, for example in \cite{CookIINagel1} and
\cite{CookIINagel2} but not for our kind of monomial ideal.

Since $T_{n,d,k}$ is a monomial algebra, $(\mathbb C^\ast)^n$ acts on
$T_{n,d,k}$ and therefore any general linear form can be identified
with $L=x_1+ \cdots + x_n$. When taking the quotient by the ideal
$(L)$, we get
$\tilde
T_{n,d,k}:=\bC[x_1,\ldots,x_{n-1}]/(x_1^d,\ldots,x_{n-1}^d,(x_1+\cdots
+ x_{n-1})^d)^k$.
Thus $T_{n,d,k}$ has the WLP if and only if
$[(1-t)T_{n,d,k}(t)]=\tilde T_{n,d,k}(t)$.

From Lemma \ref{lm:binineq} and Theorem \ref{thm:conjugate} we can
immediately get a negative result on the WLP.

\begin{tm} \label{tm:notwlpunique} Suppose that
  $k \geq d^{n-2}, n \geq 3, (n,d) \neq (3,2)$. Then $T_{n,d,k}$ fails
  the WLP.
\end{tm}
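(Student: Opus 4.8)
The plan is to exhibit a nonzero element of $T_{n,d,k}$ that is killed by multiplication by the general linear form $L=x_1+\cdots+x_n$, and to do so in the ``critical'' degree where the Hilbert function already forces a failure of maximal rank. Concretely, set $m = d^{n-1}$; since $k\ge d^{n-2}$ we have $k\ge m/d$, but more to the point we want to work inside $T_{n,d,k}$ with the power $k$ replaced by its value, so first I would reduce to the case $k=d^{n-2}$ (equivalently, note that $(x_1^d,\ldots,x_n^d)^{d^{n-2}}\subseteq (x_1^d,\ldots,x_n^d)^k$ for $k\geq d^{n-2}$, so a relation modulo the smaller power persists, and any surjectivity/injectivity failure is inherited appropriately — this monotonicity is the content one has to check carefully).

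The core input is Theorem~\ref{thm:conjugate}: in $T' := \bC[y_1,\ldots,y_n]/(y_1^d,\ldots,y_n^d)^{d^{n-2}}$ (here $n$ plays the role of ``$n+1$'' and $d^{n-2}$ the role of ``$d^{(n-1)-1}$''), the product $F=\prod(y_1+\epsilon_1 y_2 + \cdots + \epsilon_{n-1}y_n)$ over all $d^{n-1}$ tuples of $d$-th roots of unity vanishes, and $F/(y_1+\cdots+y_n)$ is a nonzero form of degree $d^{n-1}-1$ in the kernel of multiplication by $\ell := y_1+\cdots+y_n$ on $T'$. Now I would use the identification from the paragraph preceding the theorem: since $T_{n,d,k}$ is monomial, a general linear form is $L=x_1+\cdots+x_n$, and $T_{n,d,k}$ has the WLP iff $[(1-t)T_{n,d,k}(t)] = \tilde T_{n,d,k}(t)$, where $\tilde T_{n,d,k} = \bC[x_1,\ldots,x_{n-1}]/(x_1^d,\ldots,x_{n-1}^d,(x_1+\cdots+x_{n-1})^d)^k$. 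The point is that $\tilde T_{n,d,k}$ is, up to renaming, exactly the ring in which Theorem~\ref{thm:conjugate} produces its relation (it is $\bC[x_1,\ldots,x_{n-1}]/(f_1,\ldots,f_n)^k$ with $f_i$ the relevant $d$-th powers), so there is a unique relation of degree $d^{n-1}$ among the $\binom{d^{n-2}\cdot d + n - 1}{n-1}$-type generators. Lemma~\ref{lm:binineq} (applied with the substitution that makes $n-1$ the number of variables and $d^{n-2}$ the exponent, together with the hypothesis $(n,d)\neq(3,2)$ which is exactly what is needed to avoid the two exceptional pairs $(2,2)$ and $(3,2)$ where equality holds) then guarantees that this relation is \emph{nontrivial} — i.e., the Hilbert function of $\tilde T_{n,d,k}$ in the appropriate degree is strictly smaller than the ``expected'' value coming from the Koszul-type count.

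From here the argument is bookkeeping: compare $\dim (T_{n,d,k})_{j}$ and $\dim(T_{n,d,k})_{j+1}$ in the relevant degree $j$ (the degree where $F/L$ lives, shifted appropriately) with $\dim (\tilde T_{n,d,k})_{j+1} = \dim (T_{n,d,k})_{j+1} - \mathrm{rank}(\times L)_j$, and show that the strict drop forced by the nontrivial degree-$d^{n-1}$ relation makes $[(1-t)T_{n,d,k}(t)]$ strictly larger than $\tilde T_{n,d,k}(t)$ in that degree. Equivalently and more cleanly: the nonzero kernel element $F/L \in T'$, transported to $\tilde T_{n,d,k}$, witnesses that $\times L$ on $T_{n,d,k}$ is not injective in that degree, while the Hilbert function data (via $S_{n,d,k}$-type counts and Lemma~\ref{lm:binineq}) shows $\times L$ cannot be surjective there either — so maximal rank fails and WLP fails.

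The main obstacle I anticipate is precisely this last coordination step: one must pin down the exact critical degree and verify that the degree at which Theorem~\ref{thm:conjugate} gives a kernel element coincides with (or is $\le$) the degree at which the Hilbert function of $\tilde T_{n,d,k}$ is deficient relative to $\lfloor (1-t)T_{n,d,k}(t)\rfloor$, uniformly in $k\ge d^{n-2}$. The roots-of-unity relation is pinned to $k=d^{n-2}$; extending to all larger $k$ requires the monotonicity observation that the relation (and hence the kernel element) does not ``disappear'' when the ideal power grows, which needs a short argument rather than a citation. Everything else — the binomial inequality, the WLP-via-Hilbert-series criterion, the existence of the relation — is supplied by the results already in the excerpt.
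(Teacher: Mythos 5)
Your overall strategy coincides with the paper's: use Theorem~\ref{thm:conjugate} to produce a kernel element for $\times L$ and Lemma~\ref{lm:binineq} to show that $\tilde T_{n,d,k}$ is nonzero in degree $dk$, so that $\times L\colon (T_{n,d,k})_{dk-1}\to (T_{n,d,k})_{dk}$ is neither injective nor surjective. The genuine gap is in the passage from $k=d^{n-2}$ to general $k\ge d^{n-2}$. Your proposed reduction rests on the containment $(x_1^d,\ldots,x_n^d)^{d^{n-2}}\subseteq(x_1^d,\ldots,x_n^d)^{k}$, which goes the wrong way: powers of an ideal decrease, so $I^{k}\subseteq I^{d^{n-2}}$ when $k\ge d^{n-2}$. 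Hence the relation $F\equiv 0$ does \emph{not} persist modulo $I^k$; in fact $F\notin I^k$ for $k>d^{n-2}$, simply because $\deg F=d^{n-1}<dk$ while $I^k$ vanishes in all degrees below $dk$. So $F/L$ is not a kernel element of $T_{n,d,k}$ for $k>d^{n-2}$, and the ``monotonicity'' you defer to a short argument is false as stated. The repair is a multiplication, not a containment: $F$ is a polynomial in $x_1^d,\ldots,x_n^d$ of degree $d^{n-1}$, hence lies in $I^{d^{n-2}}$, so $(x_1^d)^{k-d^{n-2}}F\in I^{k}$, and $x_1^{d(k-d^{n-2})}\cdot(F/L)$ is an element of degree $dk-1$ annihilated by $L$ in $T_{n,d,k}$; it is nonzero because $I^k$ vanishes in degree $dk-1$. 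This is exactly the paper's second case.

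The ``coordination step'' you single out as the main obstacle is in fact immediate once the degrees are written down: the kernel element sits in degree $dk-1$ and the nonvanishing of $\tilde T_{n,d,k}$ occurs in degree $dk$, so both obstructions concern the single map $(T_{n,d,k})_{dk-1}\to(T_{n,d,k})_{dk}$, which therefore cannot have maximal rank. Two further inaccuracies worth fixing: the kernel element lives in $T_{n,d,k}$ itself, not in $\tilde T_{n,d,k}$ (the latter enters only through the surjectivity count); and applying Lemma~\ref{lm:binineq} in $n-1$ variables excludes the pairs $(d,n-1)=(2,2)$ and $(3,2)$, i.e.\ $(n,d)=(3,2)$ \emph{and} $(n,d)=(3,3)$, so the hypothesis $(n,d)\ne(3,2)$ removes only one of them; for $(n,d)=(3,3)$ the binomial count gives equality and one needs the extra $+1$ coming from the unique relation (Lemma~\ref{lm:unique}, realized explicitly by Theorem~\ref{thm:conjugate}) to conclude $\tilde T_{3,3,k}\ne 0$ in degree $3k$.
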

\begin{proof}
  By Lemma \ref{lm:binineq}, $\tilde T_{n,d,k}$ is non-zero in degree
  $dk$.  Thus it is enough to show that the map
  $\times (x_1+x_2+\cdots + x_n) \colon (T_{n,d,k})_{dk-1} \rightarrow
  (T_{n,d,k})_{dk}$ is not injective.
  
  Suppose that $k = d^{n-2}.$ By Theorem \ref{thm:conjugate}, the form
  $(\prod (x_1 + \epsilon_1 x_2 + \cdots +\epsilon_{n-1}
  x_n))/(x_1+x_2+\cdots+x_n)$
  of degree $dk-1$ is in the kernel of the map. Hence it cannot be
  injective.
  
  Suppose instead that $k > d^{n-2}.$ Then
  $x_1^{k-d^{n-2}} (F/(x_1+x_2+\cdots+x_n))$ is in the kernel.
\end{proof}

\subsection{The case $n=3$} 
We have a conjecture on the WLP for $T_{3,d,k}$.

\begin{co} \label{co:conjwlp3} The algebra
  $\bC[x,y,z]/(x^d,y^d,z^d)^k$ has the WLP if and only if one of the
  following conditions is satisfied.
  \begin{enumerate}
  \item $d \leq 2$,
  \item $k \leq 2$,
  \item $d> k = 2j+1 \in \{3,7\}$, $d \neq (j+2)(2 \ell+1),$
  \item $d> k =2j+1 >2, k \notin \{3,7\}$,
    $d \notin \{ (j+2)(2 \ell+1)-1, (j+2)(2 \ell+1), (j+2)(2 \ell+1)+1
    \},$
  \item $d> k = 2j >2$,
    $d \notin \{ (j+1)(2 \ell+1)+\ell, (j+1)(2 \ell+1)+\ell+1 \}.$
  \end{enumerate}
\end{co}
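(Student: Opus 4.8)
The plan is to convert the WLP into a comparison of two Hilbert functions and then analyse them degree by degree. Put $A:=T_{3,d,k}$, $L:=x+y+z$, and $\bar A:=\tilde T_{3,d,k}=A/(L)=\bC[x,y]/J$ with $J:=(x^d,y^d,(x+y)^d)^k$; write $h_i:=\dim A_i$ and $\bar h_i:=\dim\bar A_i$. The cokernel of $\times L\colon A_{i-1}\to A_i$ is $\bar A_i$, so this map always has rank $\le\min(h_{i-1},h_i)$, i.e.\ $\bar h_i\ge\max(0,h_i-h_{i-1})$, with equality precisely when the rank is maximal. Hence $A$ has the WLP if and only if $\bar h_i\le\max(0,h_i-h_{i-1})$ in every degree, and proving the conjecture amounts to: for each $(d,k)$ on the list, exhibiting in every degree enough explicit elements of $J$ to pin $\bar h_i$ down to $\max(0,h_i-h_{i-1})$; and for each excluded $(d,k)$, exhibiting a single degree in which $J$ is too small, equivalently a non-zero class killed by $L$ (or one not in the image of $L$).

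Next I would reduce to a short band of degrees. Since $J$ and $(x^d,y^d,z^d)^k$ are generated in degree $dk$, one has $h_i=\binom{i+2}{2}$ and $\bar h_i=i+1$ for $i<dk$, so the required inequality is an equality there and $\times L$ has maximal rank in all degrees below $dk$; and $\bar A\supseteq\bC[x,y]/(x^d,y^d)^k$ is Artinian, with top degree at most $dk+d-2$. The content is therefore confined to the band $dk\le i\le dk+d-2$, and on it one needs the exact value of $h_i$. This comes from $A(t)=\bigl((1-t^d)/(1-t)\bigr)^{3}\sum_{m=0}^{k-1}\binom{m+2}{2}t^{dm}$ (obtained by counting monomials $x^{\alpha}y^{\beta}z^{\gamma}$ according to $\lfloor\alpha/d\rfloor+\lfloor\beta/d\rfloor+\lfloor\gamma/d\rfloor$); because $(1+t+\cdots+t^{d-1})^{3}$ has degree less than $3d$, each $h_i$ in the band is a sum of at most three explicit binomial terms, so a short, parity-dependent computation locates the last degree in which $h$ increases and records the small differences $h_i-h_{i-1}$ that $\bar h_i$ must meet; Lemma~\ref{zerodk1} may be used to cut down the remaining vanishing statements to finitely many base cases.

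For the "WLP holds" direction I would argue as in Lemmas~\ref{gedk1} and~\ref{lemma:32k}. Using $3$-transitivity of $\mathrm{PGL}_2$ one keeps the three forms equal to $x^d,y^d,(x+y)^d$, fixes the lexicographic order, and for each degree $i$ in the band writes down products $x^{da+p}y^{db+q}(x+y)^{dc+r}$ together with suitable Newton-type combinations of them, proving by an explicit induction that their leading monomials exhaust all but $\max(0,h_i-h_{i-1})$ of the monomials of degree $i$. The bookkeeping is the heart of the matter: the count of reachable leading monomials in the critical degree is a piecewise-linear function of $d$ and $k$, and the residues $j+1$, $j+2$, $2\ell+1$ in conditions (3)--(5) are exactly the parameters governing whether an extra linear relation among the generators $x^{da}y^{db}(x+y)^{dc}$ --- coming from coincidences among $d$-th roots of unity --- is present; this is what forces the split on the parity of $k$ and on the congruence class of $d$. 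For the "WLP fails" direction I would, in each excluded case, manufacture the missing relation by hand in the spirit of Theorem~\ref{thm:conjugate}: a product $\prod(x+\zeta y)$ over a suitable subset of $d$-th roots of unity, divided by an appropriate linear or monomial factor, which lies in $J$ in the critical degree but is not accounted for by the Hilbert-function count; the case $k\ge d$ is already covered this way by Theorem~\ref{tm:notwlpunique}.

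Finally the degenerate cases are handled directly: $d=1$ is trivial; $d=2$ gives $(x^2,y^2,(x+y)^2)=(x,y)^2$, so $\bar A=\bC[x,y]/(x,y)^{2k}$, to be compared with the Hilbert function of $\bC[x,y,z]/(x^2,y^2,z^2)^k$ read off from $A(t)$ above; and $k\le 2$ is a short check, with $k=1$ being a monomial complete intersection, which even has the SLP. I expect the real obstacle to be the exact linear-algebra evaluation on the critical band --- deciding precisely which products $x^{da}y^{db}(x+y)^{dc}$ and their multiples are dependent, and in particular proving the absence of any spurious dependence in the "good" cases --- since it is this computation that both produces and must be matched against the arithmetic in conditions (3)--(5).
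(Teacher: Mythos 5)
The statement you are asked to prove is a \emph{conjecture}; the paper does not prove it, and your submission does not either. What you have written is a programme. Its framing is sound and agrees with the paper's setup: WLP for $T_{3,d,k}$ is equivalent to $\dim(\tilde T_{3,d,k})_i=\max(0,h_i-h_{i-1})$ for all $i$, the question is confined to the band $dk\le i\le dk+d-2$, and $h_i$ is explicitly known (the paper quotes it from Guardo--Van Tuyl; your monomial count gives the same series). But the decisive step --- computing $\dim\bigl(\bC[x,y]/(x^d,y^d,(x+y)^d)^k\bigr)_i$ on that band and showing it equals $\max(0,h_i-h_{i-1})$ exactly when $d$ avoids the arithmetic progressions in conditions (3)--(5) --- is deferred to ``bookkeeping'' that you never perform, and you yourself identify it as ``the real obstacle.'' That obstacle \emph{is} the conjecture: the paper only establishes sufficiency for $d\le2$ (Theorem~\ref{WLPd2}) and $k=1$, and verifies the rest by Macaulay2 computation for $d,k\le 30$. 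There is no reason given that the leading-monomial inductions of Lemmas~\ref{gedk1} and~\ref{lemma:32k}, which handle a single degree for specific small ideals, extend across the whole band for $(x^d,y^d,(x+y)^d)^k$ with the required uniformity in $d\bmod (k+3)$, so the ``WLP holds'' direction is entirely open in your write-up.

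For the ``WLP fails'' direction your proposed mechanism also does not match what is actually provable. The root-of-unity product of Theorem~\ref{thm:conjugate} yields a kernel element in degree $dk-1$ only when $k\ge d^{n-2}$, i.e.\ $k\ge d$ for $n=3$ (Theorem~\ref{tm:notwlpunique}); conditions (3)--(5) live in the complementary range $d>k$, where no such explicit element is exhibited by you, and it is not clear one exists in closed form. The paper's proof of Theorem~\ref{tm:necessity} proceeds quite differently: since $\times(x+y+z)$ is $S_3$-equivariant, it decomposes into isotypic blocks, and a character computation shows that when $d=(2\ell+1)(k+3)/2+\epsilon$ with $|\epsilon|\le1$ the trivial (or alternating) component's Hilbert function changes in the opposite direction to the total Hilbert function between degrees $dk+2\ell-1$ and $dk+2\ell$, so maximal rank fails by Schur's lemma --- this is where the parameters $j$ and $\ell$ and the exceptional values $k\in\{3,7\}$ actually come from. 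If you want to salvage a partial result, you should either carry out that character computation or supply the explicit excess element in $\tilde T_{3,d,k}$ in the critical degree; as it stands, neither direction of the conjecture is established beyond the cases the paper already proves.
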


We have already proven that the algebra $T_{3,d,k}$ fails the WLP when
$3 \leq d \leq k$ and that $T_{3,d,1}$ and $T_{3,1,k}$ have the WLP.
We will now prove Conjecture \ref{co:conjwlp3} in some more cases.

\begin{tm}\label{WLPd2}
  The algebra $T_{3,2,k}$ has the WLP.
\end{tm}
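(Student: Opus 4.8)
The plan is to show that $T_{3,2,k}$ has the WLP by using the general-linear-form reduction already set up in the text: since $T_{3,2,k}$ is a monomial algebra, a general linear form may be taken to be $L = x+y+z$, and $T_{3,2,k}$ has the WLP if and only if $[(1-t)T_{3,2,k}(t)] = \tilde T_{3,2,k}(t)$, where $\tilde T_{3,2,k} = \bC[x,y]/(x^2,y^2,(x+y)^2)^k$. Now by Theorem~\ref{main2} we know $R_{3,2,k}(t) = T_{3,2,k}(t) = \sum_{i=0}^{2k-1}\binom{i+2}{2}t^i + (3k-1)t^{2k}$, and by Lemma~\ref{lemma:32k} we know $\tilde T_{3,2,k}(t) = R_{2,2,k}$-type series; more precisely the $n=2$ results (Theorem~\ref{main}, or directly the observation that $(x^2,y^2,(x+y)^2) = (x,y)^2$, wait — that identity holds only for $d=2$) give $\tilde T_{3,2,k} = \bC[x,y]/(x,y)^{2k}$, whose Hilbert series is $\sum_{i=0}^{2k-1}(i+1)t^i$. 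So the entire problem reduces to checking the single numerical identity $[(1-t)T_{3,2,k}(t)] = \sum_{i=0}^{2k-1}(i+1)t^i$.

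First I would compute $(1-t)T_{3,2,k}(t)$ termwise. Writing $T_{3,2,k}(t) = \sum_{i=0}^{2k-1}\binom{i+2}{2}t^i + (3k-1)t^{2k}$, the coefficient of $t^i$ in $(1-t)T_{3,2,k}(t)$ for $1 \le i \le 2k-1$ is $\binom{i+2}{2} - \binom{i+1}{2} = i+1$, the coefficient of $t^0$ is $1$, and the coefficient of $t^{2k}$ is $(3k-1) - \binom{2k+1}{2}$. The key point is that this last coefficient is negative: $(3k-1) - \binom{2k+1}{2} = 3k-1 - k(2k+1) = -2k^2 + k - 1 < 0$ for all $k \ge 1$. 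Hence the truncation $[(1-t)T_{3,2,k}(t)]$ cuts off at $t^{2k}$ and equals exactly $\sum_{i=0}^{2k-1}(i+1)t^i$, which matches $\tilde T_{3,2,k}(t)$. This proves the WLP.

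I would organize the writeup as: (1) invoke the equivalence $T_{3,2,k}$ has WLP $\iff [(1-t)T_{3,2,k}(t)] = \tilde T_{3,2,k}(t)$; (2) recall $\tilde T_{3,2,k}(t) = \bC[x,y]/(x,y)^{2k}(t) = \sum_{i=0}^{2k-1}(i+1)t^i$, using that $(x^2,y^2,(x+y)^2)=(x,y)^2$ so the $k$-th power is $(x,y)^{2k}$; (3) recall $T_{3,2,k}(t)$ from Theorem~\ref{main2}; (4) do the three-line termwise subtraction above and observe the degree-$2k$ coefficient is strictly negative, so the truncation agrees with the target. The only mildly delicate point — and the one I would state carefully — is the sign computation at degree $2k$, since the whole argument hinges on the obstruction term $3k-1$ being smaller than $\binom{2k+1}{2}$; everything else is bookkeeping that follows immediately from the already-established Hilbert series in Theorem~\ref{main2}. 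There is no serious obstacle here: the hard work was done in Theorem~\ref{main2}, and this theorem is essentially a corollary of it combined with the trivial identity for $d=2$ in two variables.
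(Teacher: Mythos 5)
Your overall reduction (WLP $\iff [(1-t)T_{3,2,k}(t)]=\tilde T_{3,2,k}(t)$, then compare series) is sound, and your identification $\tilde T_{3,2,k}=\bC[x,y]/(x^2,y^2,(x+y)^2)^k=\bC[x,y]/(x,y)^{2k}$ is exactly the paper's key observation. But there is a genuine error at the center of the computation: you set $T_{3,2,k}(t)=R_{3,2,k}(t)$ and import the series from Theorem~\ref{main2}. These are different algebras. $T_{3,2,k}=\bC[x,y,z]/(x^2,y^2,z^2)^k$ is the $k$-th power of a \emph{three}-generated monomial complete intersection, whereas $R_{3,2,k}$ in Theorem~\ref{main2} is the quotient by the $k$-th power of an ideal of \emph{four} general quadrics. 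The correct series for $T_{3,2,k}$ is the Guardo--Van Tuyl formula quoted later in the paper (proof of Theorem~\ref{tm:necessity}), namely $\bigl(1-\binom{k+2}{2}t^{2k}+(k^2+2k)t^{2k+2}-\binom{k+1}{2}t^{2k+4}\bigr)/(1-t)^3$. In particular the Hilbert function in degree $2k$ is $\binom{2k+2}{2}-\binom{k+2}{2}=3k(k+1)/2$, not $3k-1$, and the algebra is nonzero in degree $2k+1$ (e.g.\ $x^3yz\notin(x^2,y^2,z^2)^2$), so the polynomial you wrote for $T_{3,2,k}(t)$ is wrong at $t^{2k}$ and stops a degree too early; your sign check at $t^{2k}$ is therefore carried out with the wrong numbers. (The side reference to Lemma~\ref{lemma:32k} is also off: that lemma concerns the three-variable algebra $\tilde T_{4,2,k}$, not $\tilde T_{3,2,k}$.)

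The conclusion is salvageable along your lines: with the correct series the coefficient of $t^{2k}$ in $(1-t)T_{3,2,k}(t)$ is $(2k+1)-\binom{k+2}{2}\le 0$, so the truncation is again $\sum_{i=0}^{2k-1}(i+1)t^i$, matching $\tilde T_{3,2,k}(t)$. The paper's own proof avoids the series entirely: since $(x^2,y^2,z^2)^k$ starts in degree $2k$, multiplication by $L$ is automatically injective below that degree, surjectivity propagates upward once it holds, and so it suffices to note that $\tilde T_{3,2,k}$ vanishes in degree $2k$, which is immediate from $(x^2,y^2,(x+y)^2)^k=(x,y)^{2k}$. Either replace your citation of Theorem~\ref{main2} by \cite{GuardoVanTuyl} and redo the degree-$2k$ arithmetic, or drop the Hilbert series comparison in favor of this shorter argument.
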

\begin{proof}
  It is enough to show that $\tilde T_{3,2,k}$ is zero in degree $2k$,
  which follows from the observation that
  $(x^2,y^2,(x+y)^2)^k = (x^2,y^2,xy)^k$.
\end{proof}
In Theorem~\ref{tm:necessity} below we prove the necessity of
conditions (3)-(5) of Conjecture \ref{co:conjwlp3}.

\begin{tm}\label{tm:necessity}
  The algebra $T_{3,d,k}$ fails to have the WLP in the cases
  \begin{enumerate}
  \item $k = 2j+1>2$, $d = (j+2)(2 \ell+1),$ where $\ell \ge 1$.
  \item $k =2j+1 >2$, $k \notin \{3,7\}$, $d = (j+2)(2 \ell+1)\pm 1$,
    where $\ell \ge 1$.
  \item $k = 2j >2$,
    $d \in\{ (j+1)(2 \ell+1)+\ell, (j+1)(2 \ell+1)+\ell+1 \}.$
  \end{enumerate}
\end{tm}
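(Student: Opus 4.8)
The plan is to reduce everything to the statement that the multiplication map $\times L \colon (T_{3,d,k})_{3k-1} \to (T_{3,d,k})_{3k}$ fails to have maximal rank, and then to produce an explicit element of the kernel (resp. cokernel) built out of the conjugate-type relation from Theorem~\ref{thm:conjugate}, suitably truncated. Concretely, first I would recall that by the $(\mathbb C^\ast)^3$-action we may take $L = x+y+z$ and that $T_{3,d,k}$ has the WLP iff $[(1-t)T_{3,d,k}(t)] = \tilde T_{3,d,k}(t)$. The socle degree of $T_{3,d,k}$ is $3(d-1)k$... more precisely the relevant "critical degree" where injectivity or surjectivity of $\times L$ can first fail is around the middle of the Hilbert function, and since the Hilbert function of a quotient of a codimension-$3$ monomial algebra is known to be unimodal-ish, failure of the WLP amounts to $\times L$ missing maximal rank in exactly one degree $e$. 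So the first step is to pin down, for each of the three parametrized families, the degree $e$ in which I will exhibit the failure.

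Second, for the divisibility cases $d = (j+2)(2\ell+1)$ and its neighbours $d \pm 1$ with $k = 2j+1$, and for $d \in \{(j+1)(2\ell+1)+\ell, (j+1)(2\ell+1)+\ell+1\}$ with $k = 2j$, I would construct the obstructing form by taking a partial product of linear forms $x + \epsilon_1 y + \epsilon_2 z$ over a carefully chosen sub-collection of root-of-unity tuples — the same mechanism as in Theorem~\ref{thm:conjugate} and Theorem~\ref{tm:notwlpunique}, but now with $k < d$ so the full product is no longer zero and one must instead select a product whose degree lands in the critical degree $e$ and which, multiplied by $L$, becomes divisible by some $x^{(?)}$, $y^{(?)}$, $z^{(?)}$ forcing it into the ideal $(x^d, y^d, z^d)^k$. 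The arithmetic conditions on $d$ in terms of $(j, \ell)$ are precisely the conditions under which the exponents work out: the number of factors one can afford before hitting degree $e$, times the contribution of each factor, must be compatible with partitioning a monomial's exponent vector into $k$ summands each coordinatewise $\le d$ in the complementary way. I expect to set up a bijection/counting argument: the relation $\prod(\text{some }\epsilon\text{-linear forms})$ expands, modulo $(x^d,y^d,z^d)^k$, to a single surviving monomial (as in the $f^2 - x^2y^2$ trick in Lemma~\ref{lemma:32k}), giving a nonzero element that is killed by $\times L$ or is not in the image of $\times L$.

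Third, I would handle the parity split ($k$ odd vs. $k$ even) separately because the midpoint of the Hilbert function sits at an integer vs. half-integer, which changes whether the obstruction is to injectivity or to surjectivity; this is why the even case has two nearby values of $d$ and the odd case (outside $k \in \{3,7\}$) has three, while $k \in \{3,7\}$ is exceptional and excluded. Here I would verify the exceptional exclusions $k \notin \{3,7\}$ by noting that for those two values the auxiliary quantity $j+2 \in \{3, 5\}$ (for $k=3$, $j=1$; for $k=7$, $j=3$) interacts with the $d\pm 1$ shift so that the constructed form degenerates (lands in the wrong degree or already in the ideal for trivial reasons), and I would remark this matches the shape of Conjecture~\ref{co:conjwlp3}(3) vs.\ (4).

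The main obstacle I anticipate is the bookkeeping in step two: showing that the truncated product of $\epsilon$-linear forms does \emph{not} already vanish in $T_{3,d,k}$ (so that it genuinely witnesses a rank drop) while simultaneously showing that after multiplying by $L$ it \emph{does} vanish. Vanishing after multiplication by $L$ is the easy direction — it follows from Theorem~\ref{thm:conjugate}-style invariance or from a direct monomial-divisibility check. Non-vanishing before multiplication is harder: it requires either an explicit leading-term argument in lex order (as in Lemmas~\ref{gedk} and \ref{lemma:32k}, identifying a monomial that survives) or a dimension count showing the kernel/cokernel of $\times L$ in degree $e$ is exactly one-dimensional and that our element is not in the span of the obvious relations. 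I would lean on the explicit Hilbert series of $T_{3,d,k}$ (obtainable from the $n=3$ results, e.g.\ via the relation to $R_{3,d,k}$ and $\tilde T_{3,d,k}$) to make this count precise, and the delicate part will be matching the arithmetic side conditions on $d$ exactly to the cases where the count gives a strict failure rather than a boundary coincidence.
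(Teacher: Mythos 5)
Your proposal is a plan rather than a proof, and the step you yourself flag as the ``main obstacle'' --- showing that a truncated product of root-of-unity linear forms is nonzero in $T_{3,d,k}$ while its product with $L$ vanishes --- is exactly the part that is never resolved, and it is not the route the paper takes. The paper's proof produces \emph{no explicit kernel element at all}. Instead it exploits that $L=x+y+z$ is $S_3$-invariant, so $\times L$ is an $S_3$-equivariant map; by Schur's lemma it respects the isotypic decomposition, and maximal rank of the whole map forces each isotypic component map to have compatible rank behaviour. The authors then compute, via characters (counting monomials fixed by the identity, the $3$-cycles, and the transpositions), the multiplicities of the trivial and alternating representations in the two consecutive degrees $dk+2\ell-1$ and $dk+2\ell$, where $d=(2\ell+1)(k+3)/2+\epsilon$ with $-1\le\epsilon\le1$. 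The first difference of the total Hilbert function between these degrees is $k\epsilon$, while the first difference of the relevant isotypic multiplicity works out to $\mp\lfloor (j+1)/3\rfloor$, $\mp(\lfloor j/2\rfloor-\lfloor j/3\rfloor)$, or $-\lfloor(j+1)/2\rfloor$ in the three cases; whenever these two differences have opposite signs (or the total difference is zero but an isotypic one is not), the map cannot have maximal rank. In particular the exclusion $k\notin\{3,7\}$ in case (2) is not a ``degeneration of the constructed form'' as you suggest: it is precisely the condition $\lfloor j/2\rfloor>\lfloor j/3\rfloor$, which fails exactly for $j=1$ and $j=3$.

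Beyond using a different mechanism, your sketch has concrete gaps that would block completion. First, you never pin down the critical degree; the paper needs the specific degrees $dk+2\ell-1,\,dk+2\ell$ straddling the turning point $i+1\approx d(k+1)(k+2)/(k+3)$ of the Hilbert function in the window $[dk,d(k+1)]$, and the arithmetic conditions on $d$ in the theorem are exactly the statement that $d$ sits at distance $\le 1$ from $(2\ell+1)(k+3)/2$. Second, even granting a candidate element, proving it nonzero in $T_{3,d,k}$ would require either a Gr\"obner/leading-term analysis of $(x^d,y^d,z^d)^k$ in the critical degree or an exact dimension count of the kernel, neither of which you supply; this is a genuinely hard computation that the representation-theoretic argument sidesteps entirely, since Schur's lemma converts the problem into comparing two integer sequences. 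If you want to salvage your approach you would essentially have to prove a sharp statement about which monomials of degree $dk+2\ell$ lie in $(x^d,y^d,z^d)^k$, which is comparable in difficulty to the theorem itself.
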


\begin{proof}
  By \cite{GuardoVanTuyl} we have that the Hilbert series of $T_{3,d,k}$ is
  given by
$$    T_{3,d,k}(t)=\frac{1-{k+2\choose2}t^{dk}+(k^2+2k)t^{d(k+1)}-
  {k+1\choose2}t^{d(k+2)}}{(1-t)^3}.$$
From this, we get that in the range $dk\le i\le d(k+1)$ the Hilbert
function is given \[\binom{i+2}2-\binom{i-dk+2}2\binom{k+2}2\]
with first difference equal to $(i+1)-(i-dk+1)(k+1)(k+2)/2$ which is
positive when $i+1<d(k+1)(k+2)/(k+3)$ and negative when
$i+1>d(k+1)(k+2)/(k+3)$. The turning point is in the interval where
this expression for the Hilbert function is valid.
  
In order to show that the WLP fails, we will use the representation
theory of the symmetric group, $S_3$. Since the algebra is monomial,
it is sufficient to consider multiplication by the linear form
$L = x+y+z$ which is symmetric and hence gives an equivariant map. In
all of the cases we consider, we have that
$d = (2\ell+1)(k+3)/2 +\epsilon$, where $-1\le \epsilon\le 1$.  We
will look at the multiplicity of the trivial and the alternating
representations in $T_{3,d,k}$ in degrees $dk+2\ell-1$ and
$dk+2\ell$. We compute these multiplicities by means of the
characters. In degree $i$ of $T_{3,d,k}$ we have
$\binom{i+2}{2}-\binom{k+2}{2}\binom{i-dk+2}{2}$ monomials that are
all fixed by the identity permutation. The other two even permutations
fixes only powers of $xyz$ so the character is $1$ if
$i\equiv 0\pmod 3$ and $k\not\equiv 0\pmod 3$ and $0$ otherwise. The
transposition $(12)$ fixes the monomials of the form $(xy)^mz^n$ and
we have that the value of the character on the three transpositions is
\[
\left\lfloor\frac{i+2}{2}\right\rfloor
-\left\lfloor\frac{k+2}{2}\right\rfloor
\left\lfloor\frac{i-dk+2}{2}\right\rfloor.
\]
Thus we get that the multiplicity of the trivial representation is
\[
\frac16\left(\binom{i+2}{2}-\binom{k+2}{2}\binom{i-dk+2}{2} + m + 3
  \left(\left\lfloor\frac{i+2}{2}\right\rfloor
    -\left\lfloor\frac{k+2}{2}\right\rfloor
    \left\lfloor\frac{i-dk+2}{2}\right\rfloor\right)\right)
\]
where the middle term $m$ is $-2$, $0$ or $2$ depending on the
congruence modulo $3$ of the sum of the other terms. In the same way
we get the multiplicity of the alternating representation as
\[
\frac16\left(\binom{i+2}{2}-\binom{k+2}{2}\binom{i-dk+2}{2} + m - 3
  \left(\left\lfloor\frac{i+2}{2}\right\rfloor
    -\left\lfloor\frac{k+2}{2}\right\rfloor
    \left\lfloor\frac{i-dk+2}{2}\right\rfloor\right)\right)
\]
When we compute the difference of the Hilbert function between degree
$dk+2\ell-1$ and $dk+2\ell$ we get
\begin{multline*}
  \binom{dk+2\ell+2}2-
  \binom{dk+2\ell+1}2-\binom{k+2}{2}\left(\binom{2\ell+2}2-\binom{2\ell+1}2\right)
  \\= dk+2\ell +1 -\binom{k+2}2(2\ell+1) = dk-(2\ell+1)\frac{k(k+3)}2
  = k\epsilon.
\end{multline*}
The difference in the number of monomials fixed by a transposition is
\begin{multline*}
  \left\lfloor\frac{dk+2\ell+2}{2}\right\rfloor -
  \left\lfloor\frac{dk+2\ell+1}{2}\right\rfloor -
  \left\lfloor\frac{k+2}{2}\right\rfloor
  \left(\left\lfloor\frac{2\ell+2}{2}\right\rfloor
    -\left\lfloor\frac{2\ell+1}{2}\right\rfloor \right)
  \\
  = \left\lfloor\frac{dk}{2}\right\rfloor -
  \left\lfloor\frac{dk-1}{2}\right\rfloor
  -\left\lfloor\frac{k+2}{2}\right\rfloor =
  \begin{cases}-j-1 &\text{if $k$ and $d$ are odd,}\\-j&
    \text{otherwise.}
  \end{cases}
\end{multline*}
When $k$ is odd and $\epsilon=\pm1$, $d$ is odd when $j$ is even and
the difference above can be expressed as $-2\lfloor j/2\rfloor-1$.
When $k$ is odd and $\epsilon=0$, $d$ is odd when $j$ is odd and the
same difference can be expressed as $-2\lfloor(j+1)/2\rfloor$.

First we consider the case $k=2j$ even and $\epsilon=\pm1/2$. We
compute the difference in the multiplicity of the trivial
representation when $\epsilon>0$ and the alternating representation
when $\epsilon<0$. This difference equals
\[
\frac16\left(\pm j +m \pm 3(-j)\right) = \mp
\left\lfloor\frac{j+1}3\right\rfloor
\]
which has opposite sign to the difference in the Hilbert function when
$j+1\ge 3$. Thus the multiplication by $L=x+y+z$ cannot have maximal
rank by Schur's lemma.

For the case $k=2j+1$ and $\epsilon = \pm1$, we have the corresponding
difference
\[
\frac16\left(\pm (2j+1)+m\pm 3\left(-2\left\lfloor\frac
      j2\right\rfloor-1\right)\right) = \mp
\left(\left\lfloor\frac{j}2\right\rfloor-\left\lfloor\frac{j}3\right\rfloor\right).
\]
Again, this has different sign than the difference in the Hilbert
function when $\lfloor j/2\rfloor > \lfloor j/3\rfloor$, i.e., when
$j=2$ or $j\ge 4$.

In the last case $k=2k+1$ and $\epsilon=0$ it is sufficient to show
that the multiplicity of the trivial representation changes since the
Hilbert function has difference zero. The difference in the
multiplicity of the trivial representation is
\[
\frac16\left( 0 + m +3\left(-2\left\lfloor\frac
      {j+1}2\right\rfloor\right)\right) = - \left\lfloor\frac
  {j+1}2\right\rfloor
\]
which is negative for all $j\ge 1$.
\end{proof}

\begin{rmk}
  We can see that the argument of the proof does not work when
  $\epsilon =\pm 3/2$ or $\epsilon = \pm2$, so in these cases we have
  the same turning point for the Hilbert function of the three
  isotypic components.
\end{rmk}

\begin{rmk}
  By computations in Macaulay2, we have verified the unproven parts of
  Conjecture \ref{co:conjwlp3} for $d,k \leq 30$.
\end{rmk}

\subsection{The general case}

As the number of variables increases, the number of non-trivial pairs
$(d,k)$ for which $\mathbb{C}[x_1,\ldots,x_n]/(x_1^d, \ldots,x_n^d)^k$
has the WLP seems to decrease.

\begin{table}[h!]
$$
\begin{array}{l|l} 
  n&\text{WLP} \\
  \hline
  4 & (2,2), (2,3), (3,2), (4,2) \\
  5 & (2,2), \ldots,(2,7),(3,2),(4,2),(4,3),(6,2)\\
  6 & (2,2), (2,3), (3,2)\\
  7 & (2,2), (2,3), (3,2)\\ 
  8 & (2,2), (3,2) \\
  9 & (2,2), (2,3), (3,2) \\
  10 & (2,2), (3,2)  \\
  11 & (2,2), (2,3)  \\
  12, 14, 16 & (2,2) \\
  2a+1  & (2,2) \\
\end{array}
$$
\caption{The right hand column consists of pairs $(d,k)$ with $d,k \geq 2$  for which 
  $\mathbb{C}[x_1,\ldots,x_n]/(x_1^d, \ldots,x_n^d)^k$ has the WLP, detected by 
  computations in Macaulay2, except for
  the last row, which relies on Theorem \ref{tm:oddwlp}.
}
\label{table:wlp}
\end{table}

In Table \ref{table:wlp} we list the non-trivial cases where have been
able detect the WLP.  Based upon these observations together with some
negative results for the WLP in positive characteristic, we believe
that Table $\ref{table:wlp}$ is complete, except for the trivial
pairs, when $n \leq 10$.  When $n \geq 11$, we guess that the WLP
holds at most in the cases $(2,2), (2,3),(3,2)$ for $n$ odd, and at
most in the cases $(2,2), (3,2)$ for $n$ even.  We can support our
guesses with two theoretical results.  \vspace{1cm}

\begin{tm}
  \begin{itemize}
  \item The algebra $T_{4,2,k}$ has the WLP if and only if $k \leq 3$.
  \item The algebra $T_{4,3,k}$ has the WLP if and only if $k \leq 2$.
  \item The algebra $T_{4,4,k}$ has the WLP if and only if $k \leq 2$.
  \item The algebra $T_{5,2,k}$ has the WLP if and only if $k \leq 7$.
  \end{itemize}
\end{tm}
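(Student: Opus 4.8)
The plan is to treat the four bullets as four independent finite verifications, each combining a \emph{positive} argument valid for the small values of $k$ with a \emph{negative} argument ruling out all larger $k$. The negative direction is, for the most part, already available: by Theorem~\ref{tm:notwlpunique}, $T_{n,d,k}$ fails the WLP as soon as $k \geq d^{n-2}$ (here $(n,d)\neq(3,2)$ always holds). This immediately kills $T_{4,3,k}$ for $k\geq 3$, $T_{4,4,k}$ for $k\geq 4$, $T_{5,2,k}$ for $k\geq 8$, and $T_{4,2,k}$ for $k\geq 4$. For $T_{4,4,k}$ we still owe the single case $k=3$, and for $T_{5,2,k}$ we owe $k\in\{8,\dots\}$ only down to $8$, so those are covered; the one genuinely missing negative case is $T_{4,4,3}$, which I would handle by an explicit computation (exhibit a kernel element of the multiplication map in the critical degree, analogous to the $F/L$ construction of Theorem~\ref{thm:conjugate} but with a correction term, or simply cite a Macaulay2 verification as is done for Theorem~\ref{thm:33k}).

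For the positive directions I would proceed bullet by bullet. The cases $k\leq 2$ are uniform: $T_{n,d,1}$ has the SLP (hence WLP) by Stanley, and $T_{n,d,2}$ should be handled by the general principle that $T_{n,d,k}$ has the WLP iff $[(1-t)T_{n,d,k}(t)] = \tilde T_{n,d,k}(t)$, where $\tilde T_{n,d,k} = \bC[x_1,\dots,x_{n-1}]/(x_1^d,\dots,x_{n-1}^d,(x_1+\cdots+x_{n-1})^d)^k$; for $k=2$ one computes the Hilbert series of $\tilde T$ (the defining ideal's square has few enough relations, or one uses the $\binom{k+r-1}{r-1}$-general-forms heuristic which is provably correct here because the relevant $S_{n,d,k}\geq 0$ in these small cases) and checks it matches the truncated derivative. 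The remaining positive cases $T_{4,2,3}$ and $T_{5,2,k}$ for $3\leq k\leq 7$ are the substantive ones; since $d=2$, the ideal $(x_1^2,\dots,x_{n-1}^2,(x_1+\cdots+x_{n-1})^2)$ has a monomial-like structure after the substitution $(x+y+z+\cdots)^2 = \sum_{i<j} x_ix_j \bmod (x_i^2)$, so I would try to reduce $\tilde T$ to a monomial algebra (cf.\ the trick $(x^2,y^2,(x+y)^2)^k=(x^2,y^2,xy)^k$ used in Theorem~\ref{WLPd2}) and then compute its Hilbert function directly, or — consistent with the paper's style, see Theorem~\ref{thm:33k} and the remarks after Conjecture~\ref{co:conjwlp3} — invoke an explicit Macaulay2 computation over a finite prime field.

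The order I would carry this out: (i) state the equivalence ``WLP $\iff [(1-t)T_{n,d,k}(t)] = \tilde T_{n,d,k}(t)$'' and record the Hilbert series of $T_{n,d,k}$ from \cite{GuardoVanTuyl}; (ii) dispatch all negative claims via Theorem~\ref{tm:notwlpunique}, leaving only $T_{4,4,3}$; (iii) dispatch $k\leq 2$ uniformly; (iv) do the finitely many positive computations $T_{4,2,3}$, $T_{5,2,3},\dots,T_{5,2,7}$ and the negative computation $T_{4,4,3}$. The main obstacle is step (iv): there is no evident closed form for the Hilbert function of $\tilde T_{5,2,k}$ for $3\leq k\leq 7$, and the $d=2$ monomialization trick that works beautifully for $n=3$ does not obviously survive for $n=5$ because $(x_1+\cdots+x_4)^2$ modulo the squares is $\sum_{i<j}x_ix_j$, which is \emph{not} a single monomial — so one must either find a clever change of coordinates, push through a representation-theoretic argument with the symmetric group $S_4$ or $S_5$ (as in the proof of Theorem~\ref{tm:necessity}, tracking isotypic multiplicities across the critical degree), or, most realistically given the paper's precedent, settle for a verified finite computation. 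I expect the honest writeup to follow the last route for these cases while keeping the conceptual arguments for the infinite families.
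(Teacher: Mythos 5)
Your overall strategy coincides with the paper's: apply Theorem~\ref{tm:notwlpunique} to dispose of all sufficiently large $k$, then verify the finitely many remaining cases (positive and negative) by direct computation in Macaulay2. However, there is a concrete error in how you apply Theorem~\ref{tm:notwlpunique}. That theorem fails the WLP for $k\ge d^{n-2}$, so for $T_{4,3,k}$ the threshold is $k\ge 3^{2}=9$, not $k\ge 3$, and for $T_{4,4,k}$ it is $k\ge 4^{2}=16$, not $k\ge 4$. (Your thresholds for $T_{4,2,k}$, namely $k\ge 4$, and for $T_{5,2,k}$, namely $k\ge 8$, are correct.) As a result, the set of cases that must be settled by hand or by machine is much larger than you budget for: you must show that $T_{4,3,k}$ fails the WLP for every $3\le k\le 8$ and that $T_{4,4,k}$ fails it for every $3\le k\le 15$, whereas your plan only accounts for the single missing negative case $T_{4,4,3}$. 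The paper states the correct thresholds $9$ and $16$ explicitly and then checks \emph{all} remaining cases --- both the claimed positive ones and these extra negative ones --- over $\mathbb{Q}$ with Macaulay2.

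Two smaller points. First, your proposed ``uniform'' treatment of $k\le 2$ via $S_{n,d,k}\ge 0$ leans on Conjecture~\ref{co:gen}, which is a conjecture about powers of \emph{general} forms, not a proven statement, and in any case $\tilde T_{n,d,k}$ involves a specific non-generic ideal; this step cannot be cited as a proof. Second, your conceptual suggestions for the positive cases ($d=2$ monomialization, $S_4$/$S_5$ isotypic bookkeeping) are not carried out in the paper and are not needed: once the finite list of remaining $(n,d,k)$ is correctly identified, a rank computation of the integer matrix for multiplication by $x_1+\cdots+x_n$ in each degree settles every case, and that is exactly what the paper does.
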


\begin{proof}
  By Theorem \ref{tm:notwlpunique}, the algebra $T_{4,2,k}$ fails the
  WLP when $k \geq 2^2= 4$, $T_{4,3,k}$ fails the WLP when
  $k \geq 3^2= 9$, $T_{4,4,k}$ fails the WLP when $k \geq 4^2= 16$,
  $T_{5,2,k}$ fails the WLP when $k \geq 2^3= 8$,

  Hence it is enough to check the WLP over $\mathbb{Q}$ for the
  remaining cases.  This has been done with Macaulay2.
\end{proof}

Finally, we can prove that
$\mathbb{C}[x_1,\ldots,x_n]/(x_1^2, \ldots,x_n^2)^2$ has the WLP when
$n$ is odd. We believe that it is also true for $n$ even, but our
method of proof does not apply in that case.

\begin{tm} \label{tm:oddwlp} For any odd $n$, the algebra
  $\mathbb{C}[x_1,\ldots,x_n]/(x_1^2,\ldots,x_n^2)^2$ has the WLP.
\end{tm}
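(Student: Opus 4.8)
The plan is to exploit the conormal sequence of the complete intersection $I=(x_1^2,\dots,x_n^2)\subseteq R:=\bC[x_1,\dots,x_n]$. Put $B=R/I$ and $A=R/I^2=T_{n,2,2}$, and take the (symmetric, hence general) linear form $L=x_1+\dots+x_n$. Since $I$ is a complete intersection, $I/I^2$ is free over $B$ on the classes of $x_1^2,\dots,x_n^2$, which gives a short exact sequence of graded $R$-modules
\[
0\longrightarrow B(-2)^{\oplus n}\longrightarrow A\longrightarrow B\longrightarrow 0,
\]
compatible with multiplication by $L$; on the submodule this map is simply $n$ shifted copies of the Lefschetz map of $B$. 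By \cite{Stanley} the monomial complete intersection $B$ has the SLP, and $B(t)=(1+t)^n$, so writing $n=2m+1$ the map $\times L\colon B_j\to B_{j+1}$ is injective for $j\le m$, surjective for $j\ge m$, and \emph{bijective} for $j=m$.

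Feeding this into the snake lemma degree by degree, $\times L\colon A_j\to A_{j+1}$ has maximal rank whenever the two maps $\times L\colon B_j\to B_{j+1}$ and $\times L\colon B_{j-2}\to B_{j-1}$ are both injective, both surjective, or one of them bijective. Because of the shift by $2$ and the oddness of $n$, there is exactly one degree at which none of these holds, namely $j=m+1$ (for $n$ even the same bookkeeping leaves two such degrees, which is why the method does not extend). A direct count gives $\dim A_j=\binom nj+n\binom n{j-2}$, which is strictly increasing through degree $m+2$, so all that remains is to show that $\times L\colon A_{m+1}\to A_{m+2}$ is \emph{injective} --- equivalently, that the connecting homomorphism $\delta\colon\ker(\times L|_{B_{m+1}})\to (B/LB)_m^{\oplus n}$ of the sequence above is injective.

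To analyze $\delta$ I would compute it explicitly. Identifying $B$ with the span of the squarefree monomials, lift a class of $A_{m+1}$ to a squarefree form $p=\sum_{|T|=m+1}c_Tx_T$; a direct computation shows that $\bar p\in\ker(\times L|_{A_{m+1}})$ exactly when $\times L\,\bar p=0$ in $B$ and the classes in $B_m$ of the contractions $p^{(i)}:=\sum_{T\ni i}c_Tx_{T\setminus i}$ all lie in $L\cdot B_{m-1}$ (these classes form $\delta(\bar p)$). Here oddness of $n$ enters a second time: since $\times L\colon B_m\to B_{m+1}$ is bijective we may write $\bar p=L\bar s$ with $\bar s\in B_m$, and then, using the identity $\sum_i x_ip^{(i)}=(m+1)p$ and reducing modulo $x_i$, the condition ``$\overline{p^{(i)}}\in L\cdot B_{m-1}$ for all $i$'' becomes ``$\bar s$ is divisible by $x_i$ in $\widetilde B$ for every $i$'', where $\widetilde B:=B/LB=\bC[x_1,\dots,x_{n-1}]/(x_1^2,\dots,x_{n-1}^2,\ell^2)$, $\ell=x_1+\dots+x_{n-1}$. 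The crux is then the transversality statement
\[
\bigcap_{i=1}^{n}x_i\,\widetilde B=0\qquad\text{in degree }m
\]
for this Artinian algebra (which is nonzero only in degrees $0,\dots,m$); this is where I expect the real work to be, and I would try to prove it via the $S_n$-action on $\widetilde B_m$ or via Gorenstein duality of $B$ applied to the ideals $(x_i,L)B$. Granting it, $\bar s\in L\cdot B_{m-1}$, hence $\bar p\in L^2\cdot B_{m-1}$; since $L\bar p=0$ while $\times L^{3}\colon B_{m-1}\to B_{m+2}$ is bijective by the SLP of $B$, this forces $\bar p=0$, which finishes the argument.

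As an alternative to the connecting-map analysis, one can instead use that $T_{n,2,2}$ has the WLP if and only if $[(1-t)T_{n,2,2}(t)]=\widetilde T_{n,2,2}(t)$, and hence only needs to bound $\dim(\widetilde T_{n,2,2})_{m+2}$ from above, by exhibiting enough products of generators of the ideal $(x_1^2,\dots,x_{n-1}^2,\ell^2)^2$ in degree $m+2$ in the spirit of the proofs of Lemmas~\ref{gedk}, \ref{gedk1} and \ref{lemma:32k}; there the combinatorial control of leading monomials would be the main obstacle.
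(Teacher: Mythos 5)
Your reduction via the conormal sequence $0\to B(-2)^{\oplus n}\to A\to B\to 0$ is valid and genuinely different from the paper's argument: the snake lemma together with the SLP of $B$ does dispose of every degree except $j=m+1$, the count $\dim A_j=\binom{n}{j}+n\binom{n}{j-2}$ is correct, and the problem is correctly converted into injectivity of the connecting map $\delta$, i.e.\ (after writing $\bar p=L\bar s$) into the transversality statement that $\bigcap_{i=1}^{n}x_i\widetilde B=0$ in the top degree $m$ of $\widetilde B=B/LB$. But that statement carries essentially the entire content of the theorem, and you do not prove it --- you explicitly write ``Granting it''. The suggested tools ($S_n$-action, Gorenstein duality of $B$) are only named, not deployed; note in particular that $\widetilde B$ itself is not Gorenstein, since its top piece has dimension $\binom{2m}{m}-\binom{2m}{m-2}>1$, so duality does not apply off the shelf. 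As it stands the proposal reduces one nontrivial injectivity statement to another nontrivial one. A secondary gap: the step ``$\overline{p^{(i)}}\in L B_{m-1}$ for all $i$ becomes $\bar s\in x_i\widetilde B$ for all $i$'' is only asserted; it can be justified (write $p=Ls+\sum_j x_j^2h_j$ in the polynomial ring and apply $\partial_i$, using $p^{(i)}=\partial_i p$ for squarefree $p$), but your route through $\sum_i x_ip^{(i)}=(m+1)p$ is not carried out.

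For comparison, the paper avoids this cliff by a different decomposition: it splits the monomial basis of $A_d$ into blocks $A_{d,i}=x_i^2\cdot(\text{squarefree})$ for $i<n$ and a complementary block $B_d$, gets full rank of $\times L$ on each block from the SLP of the two monomial complete intersections $\bC[x_1,\dots,x_n]/(x_1^2,\dots,x_n^2)$ and $\bC[x_1,\dots,x_n]/(x_1^2,\dots,x_{n-1}^2,x_n^4)$, and then settles every degree by comparing block sizes; at your critical degree $m+1$ the identity $\binom{2m+1}{m+1}+\binom{2m+1}{m-1}=\binom{2m+1}{m+2}+\binom{2m+1}{m}$ is what makes the bad case empty for $n$ odd. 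So both arguments funnel to the same degree, but the paper resolves it by counting, whereas your version still requires a new algebraic lemma there. To complete your route you must actually prove $\bigcap_i x_i\widetilde B_m=0$; alternatively, switch to a block decomposition of the paper's type, where the hard degree is handled numerically.
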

\begin{proof}
  Let $R = \mathbb{C}[x_1,\ldots,x_n]/(x_1^2,\ldots,x_n^2)^2.$ When
  $d \geq 2$, write
  $R_d = A_{d,1} \oplus \cdots \oplus \cdots A_{d,n-1} \oplus B_d$,
  where $A_{d,i}$ is spanned by all monomials of the form $x_i^2 M$
  with $M$ a squarefree monomial, and where $B_d$ is the set of
  squarefree monomials of degree $d$ together with all monomials of
  the form $x_n^2 M$, with $M$ squarefree. We have
  $|A_{d,i}| = \binom{n}{d-2}$ and
  $|B_d| = \binom{n}{d} + \binom{n}{d-2}$, where $|V|$ denotes the
  dimension of the vector space $V$. Notice that
  $R_i A_{d,j} \subset A_{i+d,j}$.

  Multiplication by $(x_1+ \cdots + x_n)$ on $A_{d,i}$ agrees with
  multiplication by $(x_1+ \cdots + x_n)$ on the basis in degree $d-2$
  in $C[x_1,\ldots,x_n]/(x_1^2,\ldots,x_n^2)$. This algebra has the
  SLP, so multiplication by $(x_1+ \cdots + x_n)$ on $A_{d,i}$ has
  full rank.

  Since
  $(x_1^2,\ldots,x_n^2)^2 \subseteq (x_1^2,\ldots,x_{n-1}^2, x_n^4)$,
  the dimension of $(x_1+ \cdots + x_n) B_{d}$ in $R$ is greater than
  or equal to the dimension of $(x_1+ \cdots + x_n) B_{d}$ in the
  algebra
  $ C:= \mathbb{C}[x_1,\ldots,x_n]/(x_1^2,\ldots,x_{n-1}^2, x_n^4)$,
  where we abuse notation and regard $B_d$ as a part of $C.$ This
  algebra also has the SLP, so the dimension of
  $(x_1+ \cdots + x_n) B_{d}$ in $C$ equals
  $\min(|B_d|,|B_{d+1}|) = \min \left(\binom{n}{d} + \binom{n}{d-2},
    \binom{n}{d+1} + \binom{n}{d-1} \right)$.

  If $|B_d| \leq |B_{d+1}|$ and $|A_d| \leq |A_{d+1}|$ we can conclude
  that multiplication by $(x_1+\cdots +x_n)$ on $R_d$ is injective. If
  $|B_d| \geq |B_{d+1}|$ and $|A_{d,i}| \geq |A_{d+1,i}|$, we can
  conclude that multiplication by $(x_1+\cdots +x_n)$ on $R_d$ is
  surjective.

  Thus we are left with two cases.
  
  In the first case we have $|B_d| < |B_{d+1}|$ and
  $|A_{d,i}| > |A_{d+1,i}|$, that is, we have the inequalities
  $\binom{n}{d} + \binom{n}{d-2} < \binom{n}{d+1} + \binom{n}{d-1}$
  and $\binom{n}{d-2} > \binom{n}{d-1}$. But
  $\binom{n}{d-2} > \binom{n}{d-1}$ implies that
  $\binom{n}{d} > \binom{n}{d+1}$, so
  $\binom{n}{d} + \binom{n}{d-2} > \binom{n}{d+1} + \binom{n}{d-1}$.
  Thus the inequalities $|B_d| < |B_{d+1}|$ and
  $|A_{d,i}| > |A_{d+1,i}|$ cannot be simultaneously satisfied.

  In the second case we have $|B_d| > |B_{d+1}|$ and
  $|A_{d,i}| < |A_{d+1,i}|$. For $\binom{n}{d-2} < \binom{n}{d-1}$ to
  hold we need to have $d-1 \leq \lfloor n/2 \rfloor$. For
  $\binom{n}{d} + \binom{n}{d-2} > \binom{n}{d+1} + \binom{n}{d-1}$ to
  simultaneously hold, we especially need the inequality
  $\binom{n}{d} > \binom{n}{d+1}$ to hold. This inequality is
  satisfied if and only if $d \geq \lfloor (n+1)/2 \rfloor$.

  Now the inequalities $d-1 \leq \lfloor n/2 \rfloor$ and
  $d \geq \lfloor (n+1)/2 \rfloor$ together gives us that
  $d = \lfloor n/2 \rfloor + 1.$ In the case $n = 2k+1$, we have
  $|B_{k+1}| = \binom{2k+1}{k+1} = \binom{2k+1}{k+1} +
  \binom{2k+1}{k-1} = \binom{2k+1}{k} + \binom{2k+1}{k+2} =
  |B_{k+2}|$,
  so the inequality $|B_d| > |B_{d+1}|$ is not fulfilled. This shows
  that the case $|B_d| > |B_{d+1}|$ and $|A_{d,i}| < |A_{d+1,i}|$ is
  empty when $n$ is odd, that is, $R$ has the WLP when $n$ is odd.
  
  Remark: When $n = 2k$, we can however not draw any conclusion
  regarding the rank of the map $\times (x_1+ \cdots + x_n)$ from
  degree $k+1$ to $k+2$.

\end{proof}


\begin{thebibliography}{1}

\bibitem{Anick}
David~J. Anick.
\newblock Thin algebras of embedding dimension three.
\newblock {\em J. Algebra}, 100(1):235--259, 1986.

\bibitem{CookIINagel2}
David Cook, II and Uwe Nagel.
\newblock The weak {L}efschetz property, monomial ideals, and lozenges.
\newblock {\em Illinois J. Math.}, 55(1):377--395 (2012), 2011.

\bibitem{CookIINagel1}
David Cook, II and Uwe Nagel.
\newblock The weak {L}efschetz property for monomial ideals of small type.
\newblock {\em J. Algebra}, 462:285--319, 2016.

\bibitem{Froberg}
Ralf Fr{\"o}berg.
\newblock An inequality for {H}ilbert series of graded algebras.
\newblock {\em Math. Scand.}, 56(2):117--144, 1985.

\bibitem{GuardoVanTuyl}
Elena Guardo and Adam Van~Tuyl.
\newblock Powers of complete intersections: graded {B}etti numbers and
  applications.
\newblock {\em Illinois J. Math.}, 49(1):265--279 (electronic), 2005.

\bibitem{Harimaetal}
Tadahito Harima, Juan Migliore, Uwe Nagel, and Junzo Watanabe.
\newblock The weak and strong Lefschetz properties for Artinian
K-algebras.
\newblock{\em J. Algebra}, 262(1):99–126, 2003.

\bibitem{MiglioreNagel}
Juan Migliore and Uwe Nagel.
\newblock Survey article: a tour of the weak and strong {L}efschetz properties.
\newblock {\em J. Commut. Algebra}, 5(3):329--358, 2013.

\bibitem{Stanley}
Richard~P. Stanley.
\newblock Weyl groups, the hard {L}efschetz theorem, and the {S}perner
  property.
\newblock {\em SIAM J. Algebraic Discrete Methods}, 1(2):168--184, 1980.

\end{thebibliography}

\end{document}